\documentclass[twoside,12pt]{article}

\usepackage[T1]{fontenc}
\usepackage[utf8]{inputenc}
\usepackage{lmodern}
\usepackage{microtype}
\usepackage{amsmath,amssymb,amsthm,mathtools}
\usepackage{xcolor}
\usepackage[a4paper,top=2cm,bottom=2cm,left=2cm,right=2cm]{geometry}
\usepackage{enumitem}
\usepackage[colorlinks=true,linkcolor=blue,citecolor=blue,urlcolor=blue]{hyperref}
\usepackage[nameinlink,noabbrev]{cleveref}

\hypersetup{
  pdftitle={Higher-Order Correlation Filters and Retained Spines for Multicolor Ramsey Numbers},
  pdfauthor={Yaping Mao and Gang Yang},
  pdfsubject={Upper bounds for multicolor diagonal Ramsey numbers},
  pdfkeywords={Ramsey numbers, book method, higher-order correlation, root filters, retained spines, relative entropy}
}

\allowdisplaybreaks
\setlist[enumerate]{itemsep=1pt,topsep=4pt,parsep=0pt,partopsep=0pt}

\newtheorem{theorem}{Theorem}[section]

\newtheorem{lemma}[theorem]{Lemma}

\theoremstyle{definition}

\theoremstyle{remark}

\crefname{theorem}{theorem}{theorems}
\Crefname{theorem}{Theorem}{Theorems}
\crefname{lemma}{lemma}{lemmas}
\Crefname{lemma}{Lemma}{Lemmas}
\crefname{proposition}{proposition}{propositions}
\Crefname{proposition}{Proposition}{Propositions}
\crefname{corollary}{corollary}{corollaries}
\Crefname{corollary}{Corollary}{Corollaries}
\crefname{definition}{definition}{definitions}
\Crefname{definition}{Definition}{Definitions}
\crefname{construction}{construction}{constructions}
\Crefname{construction}{Construction}{Constructions}
\crefname{claim}{claim}{claims}
\Crefname{claim}{Claim}{Claims}
\crefname{remark}{remark}{remarks}
\Crefname{remark}{Remark}{Remarks}
\crefname{observation}{observation}{observations}
\Crefname{observation}{Observation}{Observations}

\newcommand{\norm}[1]{\left\lVert #1\right\rVert}
\newcommand{\E}{\mathbb E}
\newcommand{\Pp}{\mathbb P}
\newcommand{\R}{\mathbb R}
\newcommand{\C}{\mathbb C}
\newcommand{\N}{\mathbb N}
\newcommand{\1}{\mathbf 1}
\newcommand{\ip}[2]{\left\langle #1,#2\right\rangle}
\newcommand{\tensor}{\otimes}
\newcommand{\dd}{\,\mathrm d}
\newcommand{\KL}{D}

\begin{document}

\title{\textbf{New upper bound
for multicolor Ramsey numbers}}
\author{Gang Yang\footnote{Graduate School of Environment and Information Sciences, Yokohama National University, 79-2 Tokiwadai, Hodogaya-ku, Yokohama 240-8501, Japan. {\tt gangyang98@outlook.com}},\quad Yaping Mao\footnote{Corresponding author. School of Mathematics and Statistics, Qinghai Normal University, Xining 810008, China. {\tt maoyaping@outlook.com}.}
}
\date{}
\maketitle

\begin{abstract}
Let $R_r(k)$ denote the diagonal $r$-color graph Ramsey number. We prove that
there exist absolute constants $c,K>0$ such that
\[
 R_r(k)\le
 \exp\!\left(-c\frac{k}{r^2\log^4(2r)}\right)r^{rk}
\]
for every $r\ge2$ and every $k\ge Kr^2\log^6(2r)$. The proof combines a
positive-coefficient root filter of variable order with a retained-spine
refinement of the multicolor book method.\\[2mm]
\noindent\textbf{Keywords:} Multicolor Ramsey numbers; book method;
higher-order correlation; root filters; relative entropy.

\medskip
\noindent\textbf{2020 Mathematics Subject Classification.}
05C55, 05D10, 30D15.
\end{abstract}
\section{Introduction}\label{sec:introduction}

For integers $r,k\ge2$, let $R_r(k)$ denote the least integer $n$ such that
every coloring of the edges of $K_n$ with $r$ colors contains a
monochromatic copy of $K_k$. Ramsey's theorem shows that $R_r(k)$ is finite
\cite{Ramsey1930}. The classical Erd\H{o}s--Szekeres recursion gives
$ R_r(k)\le r^{rk}$;
see \cite{ErdosSzekeres1935}. The purpose of this paper is to obtain an
exponential improvement whose dependence on $r$ remains effective when the
number of colors grows.

We use the following conventions throughout. All graphs are finite and simple,
all logarithms are natural, $\N=\{1,2,3,\ldots\}$, and
$[m]=\{1,\ldots,m\}$. For nonnegative quantities $f$ and $g$, the notation
$f=O(g)$ means that $f\le Cg$ for an absolute constant $C$, while
$f=\Omega(g)$ means that $f\ge cg$ for an absolute constant $c>0$. We write
$f=\Theta(g)$ when both estimates hold, that is, when there are absolute
constants $0<c\le C$ such that $cg\le f\le Cg$ throughout the stated range.
A subscript, as in $O_a(g)$, records the parameters on which the implicit
constant may depend. Finally, $f=o(g)$ in a specified limit means that
$f/g\to0$. We also write $(x)_+=\max\{x,0\}$, use $\1_A$ for the indicator
of a set or event $A$, and denote probability and expectation by $\Pp$ and
$\E$.

For two colors, Thomason, Conlon, and Sah obtained increasingly strong
subexponential improvements to the classical estimate
\cite{Conlon2009,Sah2023,Thomason1988}. Campos, Griffiths, Morris, and
Sahasrabudhe \cite{CamposEtAl2026} then proved the first exponential
improvement over the Erd\H{o}s--Szekeres upper bound. Gupta, Ndiaye, Norin,
and Wei \cite{GuptaEtAl2024} subsequently refined their argument and obtained
$R_2(k)\le3.8^{k+o(k)}$. We refer to
\cite{ConlonFoxSudakov2015,Wigderson2025} for broader accounts of graph
Ramsey numbers and the recent book method. Erd\H{o}s's probabilistic lower
bound \cite{Erdos1947} shows that the diagonal problem is exponential in
$k$, although a large gap between the upper and lower exponential rates
remains.

The multicolor case was subsequently treated by Balister, Bollob\'as,
Campos, Griffiths, Hurley, Morris, Sahasrabudhe, and Tiba
\cite{BalisterEtAl2026}. They proved that an exponential improvement holds
for every fixed number of colors and obtained the explicit estimate
\[
 R_r(k)\le \exp\!\left(-2^{-160}r^{-12}k\right)r^{rk}
 \qquad\bigl(k\ge 2^{200}r^{20}\bigr).
\]
In a recent preprint, Narang and Tang \cite{NarangTang2026} used robust OR
polynomials to sharpen the color dependence in the square-root correlation
estimate. Their result is
\begin{equation}\label{eq:narang-tang}
 R_r(k)\le
 \exp\!\left(-c\frac{k}{r^9\log^6(2r)}\right)r^{rk}
 \qquad\bigl(k\ge Cr^{14}\log^{12}(2r)\bigr)
\end{equation}
for absolute constants $c,C>0$.

Our main theorem gives a stronger dependence on $r$ in both the exponential
improvement and the range of $k$.

\begin{theorem}\label{thm:main}
There are absolute constants $c,K>0$ such that, for every $r\ge2$ and every
$k\ge Kr^2\log^6(2r)$, one has
\[
 R_r(k)\le
 \exp\!\left(-c\frac{k}{r^2\log^4(2r)}\right)r^{rk}.
\]
\end{theorem}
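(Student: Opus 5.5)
The plan is to follow the multicolor book method of Balister et al.\ \cite{BalisterEtAl2026}. I would isolate the single place where the color dependence enters, namely the correlation (``density increment'') step, and strengthen it with a higher-order filter in place of the square-root or robust-OR polynomial of \cite{NarangTang2026}.

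\textbf{Step 1: reduction to a book lemma.} First I would reduce \Cref{thm:main} to a multicolor book statement. Suppose a coloring of $K_n$ with $n=e^{-\delta k}r^{rk}$ and $\delta=c/(r^2\log^4(2r))$ has no monochromatic $K_k$. Then it suffices to find, for some color $i$, a monochromatic ``book'' $(S,T)$ in color $i$. Here $S$ is a clique of size $t_i$ and $T$ is a common neighbourhood of size at least $e^{-\delta' t_i}r^{-t_i}\cdot(\text{something})n$, with $t_i=\Omega(k/(r\log^2(2r)))$ and a gain of $e^{\epsilon t_i}$ over the trivial $r^{-t_i}$ ratio. The Erd\H{o}s--Szekeres recursion applied inside $T$ then finishes the argument. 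The exponent bookkeeping is routine: a gain of $\epsilon t$ with $\epsilon\asymp 1/(r\log^2(2r))$ and $t\asymp k/(r\log^2 (2r))$ yields the claimed $k/(r^2\log^4(2r))$.

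\textbf{Step 2: the algorithm.} Next I would run the standard algorithm. We maintain a vertex set $X$ and targets $Y_1,\dots,Y_r$ with densities $p_i$. At each step we pick a vertex and either take a color step (placing the vertex into the spine of color $i$), or take a density-boost step, or degree-regularize. The ``retained spine'' refinement keeps, in each color, spine vertices that have already been chosen even when later boost steps occur. This ensures that the number of boost steps is charged against a potential function $\sum_i\log p_i$ rather than reset. I would control the process by a relative-entropy potential $\Phi=\sum_i D(\text{current color profile}\,\|\,\text{uniform})$. The aim is to show that each non-boost step costs only $O(\epsilon^2)$ in $\Phi$, while boost steps increase it by $\Omega(\epsilon/\log^2(2r))$.

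\textbf{Step 3: the key correlation lemma, which is the main obstacle.} Given that the $r$ colored neighbourhood vectors of a typical vertex have densities near $1/r$, we must show that some color has a ``book step'' with excess correlation $\Omega(\epsilon)$. Previous work uses $\sqrt{\cdot}$ or OR-polynomials and loses $r^{7}$ factors. Instead I would use a polynomial $P$ of degree $d\asymp\log(2r)$ with nonnegative coefficients whose roots filter out the uniform point. Since $P(x)=\prod_j(x-\rho_j)$ with all roots real and negative, $\E[P(\langle u,v\rangle)]\ge0$ for Gram-type kernels, by Schur's product theorem. I would then choose the order $d$ variably, adapting to the scale of the deviation. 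The claim to prove is that the $d$-th tensor power of the deviation vector has an $\ell^\infty$-to-$\ell^1$ loss of only $r\log^2(2r)$. The hard part is constructing $P$ with coefficients bounded by $r^{O(1)}$ while $P$ still separates $p_i\ge 1/r+\epsilon$ from the uniform point. I would first try Chebyshev-type shifted polynomials on $[0,1/r]$, since these give $\log(2r)$ degree and only polylogarithmic coefficient growth. Finally I would check that the range condition $k\ge Kr^2\log^6(2r)$ ensures that all error terms ($1/k$ losses per step, times $r\log^2$ steps per scale) are dominated by the gain.
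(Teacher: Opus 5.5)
There is a genuine gap. The missing piece is the correlation lemma in your Step~3: you flag it as the main obstacle but do not resolve it, and the tools you suggest would not resolve it.

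The positivity part is fine. Nonnegative coefficients and tensor powers give $\E[\prod_i Z_i^{a_i}]\ge0$, as in \Cref{lem:positive-moments}. What the book algorithm actually consumes is a tail statement. With $Z_i=\langle\sigma_i(U),\sigma_i(U')\rangle$, one needs some $i$ and $\lambda\ge-1$ with $\Pp(Z_i\ge\lambda,\ Z_j\ge-1\ \text{for all } j\ne i)\ge\beta e^{-C(\lambda+1)^{1/d}}$, where $\log(1/\beta)=O(r)$ and $C=O(rd^2\log(2rd))$. Your $\ell^\infty$-to-$\ell^1$ tensor claim is not this statement.

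To extract the tail statement from positivity, you need a function of all $r$ variables with nonnegative Taylor coefficients. It must be $\le-1$ as soon as \emph{one} coordinate is $\le -L$, \emph{however negative} that coordinate is, and at most $e^{O(r)}\exp(c(M+1)^{1/d})$ otherwise. The negative range is genuinely long: one only knows $Z_j\ge -p_j/\alpha_j$, which can be of order $pt/\delta$ and grows with $k$.

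Your candidate filters fail here. For a polynomial of degree $\asymp\log(2r)$ with negative real roots, $|P(-y)|/P(y)=\prod_j(y-|\rho_j|)/(y+|\rho_j|)\to1$ as $y\to\infty$, so strongly negative correlations are not penalised. Shifted Chebyshev polynomials $T_n(1+x)$ do have nonnegative coefficients, but they behave like $\cosh(n\sqrt{2x})=E_2(2n^2x)$. They therefore reproduce exactly the square-root tail you are trying to beat.

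The missing idea is the root filter $E_d(z)=\sum_{n\ge0}z^n/(dn)!$. This is an entire function of order $1/d$, not a polynomial. Averaging $e^{u\zeta}$ over the $d$th roots of $1$ and of $-1$ gives $E_d(u^d)\ge e^u/(2d)$ and $|E_d(-u^d)|\le e^{u\cos(\pi/d)}$. Set $H=1+aE_d^2$, let $G$ be its odd part, and put $F=\prod_iH(x_i)\sum_jG(x_j)/H(x_j)$. Then $G(-y)/H(-y)\le-r$ for all $y\ge u_{r,d}^d$, with $u_{r,d}\asymp d^2\log(2rd)$ because $1-\cos(\pi/d)\asymp d^{-2}$. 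That $d^2$, with $d\asymp\log(2r)$, is the source of the $\log^2(2r)$ in the spine fraction. ``Variable order'' means $d$ is a free parameter chosen in terms of $r$, not an adaptation to the scale of the deviation.

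Steps~1--2 also misplace the gain. The book produced by the algorithm does not beat $r^{-t}$. Its page densities are $p=(1-2\alpha\vartheta)/r<1/r$ and the pages shrink by $p^te^{-\Pi t}$, so the saving must come from the off-diagonal Ramsey number inside the pages.

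In the multicolor setting this needs care. The min-degree hypothesis $|N_i(w)\cap W|\ge p|W|$ in every color requires the Erd\H{o}s--Szekeres regularization, which costs a factor $r^{-s}$. The paper's ``retained spines'' are the regularization cliques $S_1,\dots,S_r$. Keeping spine vertices through boost steps, as you describe, is already automatic and buys nothing.

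Retaining the $S_j$ turns the problem in the pages into $R(k-s_1,\dots,k-s_i-t,\dots,k-s_r)$, and one then splits into two cases:
\begin{itemize}
\item If $s\ge rt/4$, the regularization gain $(1+\eta)^s$ already suffices.
\item If $s<rt/4$, the $i$th target lies more than $t/4$ below its multinomial mean. A one-coordinate binomial entropy bound then gives $R\le r^{rk-s-t}e^{-t^2/(64k)}$, which beats the page loss.
\end{itemize}
Your relative-entropy potential $\Phi$ and its asserted step costs are unsubstantiated and play no role.

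Finally, the scales are not routine bookkeeping. The choice $t\asymp\vartheta k$ with $\vartheta\asymp(rd^2\ell^{2/d})^{-d/(d-1)}$, where $\ell=\log(2rd)$, is forced by keeping the reservoir loss $rt\,\Xi$ below $\frac18 rk\log r$. That loss is dominated by $rt\,C_{r,d}\lambda_0^{1/d}$ with $\lambda_0\asymp\ell^2/\vartheta$.
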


It is useful to compare the three bounds on the same scale. Balister et al.
obtained an exponential saving of order $k/r^{12}$ once $k$ is at least of
order $r^{20}$. Narang and Tang improved these two quantities to
$k/(r^9\log^6(2r))$ and $r^{14}\log^{12}(2r)$, respectively. The present
bound gives a saving of order $k/(r^2\log^4(2r))$ under the substantially
smaller condition $k\ge Kr^2\log^6(2r)$. Thus, relative to
\eqref{eq:narang-tang}, the saving in the exponent is larger by a factor of
order $r^7\log^2(2r)$, while the displayed sufficient lower bound on $k$ is
smaller by a factor of order $r^{12}\log^6(2r)$. These comparisons concern
the dependence on a growing number of colors. We do not try to optimize the
numerical base for a fixed small value of $r$; in particular, the specialized
two-color estimate of Gupta, Ndiaye, Norin, and Wei is stronger when $r=2$.

The proof yields the following more flexible statement. It is useful both for
tracking the dependence on the order of the root filter and for explaining the
choice of parameters in \Cref{thm:main}.

\begin{theorem}\label{thm:parameterised-main}
There are absolute constants $c,K>0$ such that the following holds. Let
$r\ge2$, and let $d$ be an integer satisfying
$
 3\le d\le\max\{3,\log(2r)\}.
$
If
\begin{equation}\label{eq:parameter-threshold}
 k\ge
 K r^{(2d^2-1)/(d-1)^2}
 d^{2d(2d-1)/(d-1)^2}
 \bigl(\log(2rd)\bigr)^{2d^2/(d-1)^2},
\end{equation}
then
\begin{equation}\label{eq:parameter-saving}
 R_r(k)\le
 \exp\!\left[
 -c\frac{k}{
 r^{2d/(d-1)}d^{4d/(d-1)}
 (\log(2rd))^{4/(d-1)}}
 \right]r^{rk}.
\end{equation}
\end{theorem}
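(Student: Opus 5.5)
The plan follows the multicolor book method of Balister et al., with two changes: the square-root correlation step becomes an order-$d$ root-filter step, and the book algorithm keeps track of a retained spine. The proof has three parts.

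\emph{Part 1: a correlation lemma of order $d$.} Let $U_1,\dots,U_r$ be independent random vectors, for instance normalized indicator projections of random vertices with respect to each color. We would build a polynomial filter $P_d$ of degree $d$ with positive coefficients, so that $\E\,P_d(\ip{U_i}{U_j})\ge0$ term by term. The required positivity would come from nonnegativity of moments of tensor powers, $\E\ip{U}{U'}^{m}=\norm{\E\,U^{\tensor m}}^2\ge0$. The filter should approximate the OR indicator $\1\{\max_i X_i\ge\lambda\}$ with error controlled at order $d$.

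The output we aim for is the statement that some color has density increment $\lambda\gtrsim r^{-1/(d-1)}d^{-O(1)}(\log(2rd))^{-1/(d-1)}$ on a positive-probability event. This replaces the $r^{-1/2}$-type gain of the square-root estimate. The factor $r^{2d/(d-1)}$ in \eqref{eq:parameter-saving} is consistent with a saving of $\lambda^2$ per color step, multiplied by a further $r^{-2}$-type loss from the other colors.

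\emph{Part 2: a retained-spine book algorithm.} We would run the multicolor book algorithm, with color steps, density-boost steps and degree-regularization steps. When the correlation lemma applies, we add vertices to a spine in some color. The change is that only a retained portion of each spine is kept. This keeps the number of density-boost steps, and hence the entropy loss in the relative-entropy potential, at order $t\,d^{O(1)}$ rather than polynomial in $r$.

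From this we would prove a book lemma: every $r$-coloring of $K_n$ with $n\ge e^{-\delta k}r^{rk}$ contains, in some color $i$, a book with spine size $t=\Omega(k)$ and page set of size at least $r^{-t}e^{\delta' t}\,r^{r(k-t)}$-ish. The improvement $\delta'$ should be of order $\lambda^2/r$.

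\emph{Part 3: induction.} Finally we would induct on the vector $(k_1,\dots,k_r)$ of off-diagonal parameters, using the Erd\H{o}s--Szekeres recursion weighted by relative entropy, as in Balister et al. The book lemma improves the recursion along one color, and summing these improvements gives the factor $\exp(-ck/(r^{2d/(d-1)}d^{4d/(d-1)}\log^{4/(d-1)}))$. The threshold \eqref{eq:parameter-threshold} comes from requiring that the error terms in the book lemma be lower order. In particular we need $\lambda k\gg d\log(2rd)$ times the moment-approximation losses, and solving for $k$ gives the exponent $(2d^2-1)/(d-1)^2$.

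\emph{Main obstacle.} The hardest step is Part 1. We need a degree-$d$ positive filter whose approximation error to the OR of $r$ events costs only $r^{1/(d-1)}$ rather than $r$. We would first try a truncated exponential, $P(x)=\sum_{m\le d}\frac{(sx)^m}{m!}$ with $s\approx r^{1/(d-1)}$, using Taylor remainder bounds to control the region where $x$ is small. Keeping the dependence on $d$ at $d^{O(d/(d-1))}$, and not factorial, will require careful choice of $s$. This is also why $d$ is capped at $\log(2r)$.
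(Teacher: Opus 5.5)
There is a genuine gap in the analytic core, and a second one in how the saving is produced.

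\emph{The filter.} The book algorithm does not consume an increment of a fixed size $\lambda\gtrsim r^{-1/(d-1)}\cdots$ on some event. It needs a trade-off: there exist $i\in[r]$ and $\lambda\ge-1$ with $\Pp(Z_i\ge\lambda,\ Z_j\ge-1\text{ for all }j\ne i)\ge\beta e^{-C(\lambda+1)^{1/d}}$. Here $Z_j=\ip{\sigma_j(U)}{\sigma_j(U')}$ for two independent samples; these are not inner products across colors. The simultaneous lower bounds on the other colors are essential. To extract this from $\E F(LZ_1,\dots,LZ_r)\ge0$, you need a function $F$ of all $r$ variables, with nonnegative Taylor coefficients, such that $F\le-1$ as soon as \emph{one} coordinate is $\le-L$, whatever the other coordinates are. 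A one-variable filter applied to each $Z_i$ fails, because one very negative coordinate can be outweighed by the others. Moreover, no polynomial $F$ has this property. Restrict to two coordinates and let $F_n$ be the top homogeneous part. Sending $\rho\to\infty$ along the rays $\rho(-a,b)$, $\rho(a,-b)$, $\rho(-a,-b)$ forces $F_n(-a,b)\le0$, $F_n(a,-b)\le0$ and $F_n(-a,-b)\le0$ for all $a,b>0$. For $n$ odd the first two give $F_n\equiv0$; for $n$ even the third contradicts $F_n(a,b)>0$. The statement must hold for arbitrary maps $\sigma_i$, so the values $LZ_i$ are unbounded and truncation does not help. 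Your degree-$d$ truncated exponential is therefore ruled out. The missing idea is a transcendental positive-coefficient function of order $1/d$ that is exponentially smaller on the negative axis, namely $E_d(z)=\sum_n z^n/(dn)!$. By the roots-of-unity filter, $E_d(u^d)\ge e^u/(2d)$ for large $u$ and $|E_d(-u^d)|\le e^{u\cos(\pi/d)}$. One then takes $H=1+aE_d^2$, $G$ its odd part, and $F=\sum_jG(x_j)\prod_{i\ne j}H(x_i)$. It is the order-$1/d$ growth of $H$ that yields the $(\lambda+1)^{1/d}$ tail. The resulting constant $C_{r,d}=O(rd^2\log(2rd))$ is still linear in $r$. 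So the filter produces no $r^{1/(d-1)}$ saving, contrary to what your ``main obstacle'' paragraph aims for.

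\emph{The source of the saving.} Your Parts 2 and 3 misidentify this. In the paper, the retained spines are the preliminary Erd\H{o}s--Szekeres cliques $S_1,\dots,S_r$ from the regularization lemma, kept alongside the reservoir $W$. Nothing built by the book algorithm is discarded. The number of boost steps per color, at most $6t\log(1/\delta)/\lambda_0$, is controlled purely by density bookkeeping. There is no induction on $(k_1,\dots,k_r)$; the argument is one-shot.
\begin{itemize}
\item If $s=\sum_js_j\ge rt/4$, the factor $(1+\eta)^s$ from regularization beats the deficit $e^{-\gamma\vartheta^2k}$, and the Erd\H{o}s--Szekeres bound inside $W$ finishes.
\item Otherwise the book theorem in $W$ gives a color-$i$ book with $|T|=t$ and $|P|\ge R(k-s_1,\dots,k-s_i-t,\dots,k-s_r)$. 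The $i$th target lies more than $t/4$ below $B/r$, where $B=rk-s-t$ is the sum of the targets. A one-coordinate relative-entropy bound then gives $R(\cdots)\le r^{rk-s-t}e^{-t^2/(64k)}$.
\end{itemize}
The saving is therefore of order $\vartheta^2k$ with $t=\lfloor\vartheta k\rfloor$. The parameter $\vartheta$ is fixed by requiring the reservoir loss $4rtC_{r,d}\lambda_0^{1/d}$, with $\lambda_0\asymp\log^2(1/\delta)/\vartheta$, to be at most $\frac18rk\log r$. This balancing is exactly what produces $r^{2d/(d-1)}d^{4d/(d-1)}(\log(2rd))^{4/(d-1)}=b^2\vartheta^{-2}$. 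In particular, $t$ is not $\Omega(k)$ with an absolute constant. Your heuristic ``$\lambda^2$ per color step times an $r^{-2}$ loss'' does not correspond to any step of a working argument.
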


For instance, $d=3$ gives an exponential improvement of order
$k/(r^3\log^2(2r))$, while $d=4$ gives one of order
$k/(r^{8/3}\log^{4/3}(2r))$. The choice
\[
 d=\max\left\{3,\left\lceil\frac12\log(2r)\right\rceil\right\}
\]
is admissible for every $r\ge2$ and satisfies $d=\Theta(\log(2r))$ as
$r\to\infty$. This choice gives \Cref{thm:main}. All constants in the
argument are independent of $d$ in the stated range.

The proof has two main new ingredients. The first is a higher-order
correlation lemma. Roughly speaking, it converts a failure of simultaneous
weak correlation among several vector-valued maps into a clustering
conclusion for one of the maps. In the correlation estimate used in the
earlier multicolor book argument, the relevant tail has a square-root
exponent. Our higher-order construction replaces this exponent by $1/d$,
where the integer $d$ may grow with the number of colors. This flexibility is
the source of the improved dependence on $r$.

The second ingredient is a refinement of the book argument. Recall that a
color-$i$ book consists of a color-$i$ clique, called its spine, together
with a set of common color-$i$ neighbors, called its page set. The initial
Erd\H{o}s--Szekeres regularization produces, for every color $i$, a
monochromatic clique $S_i$ joined in color $i$ to a common reservoir. Unlike
in the usual application of the book method, we retain all of these
preliminary cliques during the subsequent construction.

Suppose that the final book has color $i$, a spine of size $t$, and page set
$P$, and write $s_j=|S_j|$. The Ramsey problem remaining inside $P$ then has
target sizes
$
 k-s_1,\ldots,k-s_i-t,\ldots,k-s_r.
$
If $s_1+\cdots+s_r$ is large, the gain produced by regularization already
suffices. If it is small, then the distinguished target $k-s_i-t$ lies below
the diagonal by an amount of order $t$, and a one-coordinate entropy
estimate gives an additional factor
$\exp\!\left(-\Omega(t^2/k)\right)$. Combining this retained-spine gain with
the higher-order correlation lemma yields the stated bound.

The rest of the paper is organized as follows. In \Cref{sec:prelim} we
introduce the notation and collect the preliminary results used later. In
\Cref{sec:analytic} we prove the main analytic estimate. In \Cref{sec:book}
we use this estimate to establish the main combinatorial result. Finally, in
\Cref{sec:transfer} we combine these ingredients to prove the stated bounds
on $R_r(k)$.

\section{Preliminaries}\label{sec:prelim}

Given an $r$-edge-colored complete graph, a color $i\in[r]$, and a vertex
$v$, let
$
 N_i(v)=\{w:vw\text{ has color }i\}
$
be the color-$i$ neighborhood of $v$. If $X$ and $Y$ are nonempty vertex
sets, define the minimum color-$i$ density from $X$ to $Y$ by
\[
 p_i(X,Y)=\min_{x\in X}\frac{|N_i(x)\cap Y|}{|Y|}.
\]
The sets $X$ and $Y$ are not required to be disjoint.

A \emph{color-$i$ book} is an ordered pair $(T,P)$ of disjoint vertex sets
such that $T$ is a color-$i$ clique and every edge between $T$ and $P$ has
color $i$. We call $T$ the \emph{spine} and $P$ the \emph{page set}. A
$(t,m)$-book is a book satisfying $|T|=t$ and $|P|\ge m$. This terminology is
consistent with that used in the recent diagonal Ramsey work
\cite{CamposEtAl2026,Wigderson2025} and in its multicolor development
\cite{BalisterEtAl2026}.

For positive integers $k_1,\ldots,k_r$, let $R(k_1,\ldots,k_r)$ denote the
least $n$ with the following property: every $r$-edge-coloring of $K_n$
contains a color-$i$ copy of $K_{k_i}$ for some $i\in[r]$. For nonnegative
integers $b_1,\ldots,b_r$ whose sum is $B$, write
\[
 \binom{B}{b_1,\ldots,b_r}=\frac{B!}{b_1!\cdots b_r!}.
\]
Writing $B=k_1+\cdots+k_r$, the Erd\H{o}s--Szekeres recursion gives
\begin{equation}\label{eq:off-diagonal-standard}
 R(k_1,\ldots,k_r)
 \le \binom{B-r}{k_1-1,\ldots,k_r-1}.
\end{equation}
For later use we record the convenient weaker consequence
\begin{equation}\label{eq:off-diagonal-basic}
 R(k_1,\ldots,k_r)
 \le \binom{B}{k_1,\ldots,k_r}
 \le r^B.
\end{equation}
Indeed,
\[
 \frac{\binom{B}{k_1,\ldots,k_r}}
 {\binom{B-r}{k_1-1,\ldots,k_r-1}}
 =\frac{B(B-1)\cdots(B-r+1)}{\prod_{i=1}^r k_i}.
\]
Moreover,
\[
 B(B-1)\cdots(B-r+1)
 \ge(B-r+1)^r
 \ge\left(\frac Br\right)^r
 \ge\prod_{i=1}^r k_i,
\]
where the middle inequality follows from $B\ge r$ and the last one from the
arithmetic--geometric mean inequality. This proves the first inequality in
\eqref{eq:off-diagonal-basic}; the second follows from the multinomial
theorem.

We shall use the following form of the Erd\H{o}s--Szekeres regularization
lemma; see \cite[Lemma~5.2]{BalisterEtAl2026}. We refer to the sets $S_i$ as
the \emph{preliminary color-$i$ spines} and to $W$ as the \emph{common
reservoir}.

\begin{lemma}[\cite{BalisterEtAl2026}]\label{lem:regularization}
Let $\eta>0$, and let the edges of $K_n$ be colored with $r$ colors. There
are pairwise disjoint sets $S_1,\ldots,S_r,W$ such that, writing
$s_i=|S_i|$ and $s=\sum_i s_i$,
\begin{equation}\label{eq:regularization-size}
 |W|\ge\left(\frac{1+\eta}{r}\right)^s n,
\end{equation}
and, for every $w\in W$ and every $i\in[r]$,
\begin{equation}\label{eq:regularization-degree}
 |N_i(w)\cap W|
 \ge\left(\frac1r-\eta\right)|W|-1.
\end{equation}
Moreover, $S_i$ is a color-$i$ clique and every edge between $S_i$ and $W$
has color $i$.
\end{lemma}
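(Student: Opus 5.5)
The lemma follows from a greedy iteration that maintains the clique and size invariants at every step and stops when the degree condition \eqref{eq:regularization-degree} holds.

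Begin with $S_1=\cdots=S_r=\emptyset$ and $W=V(K_n)$, so $s=0$ and \eqref{eq:regularization-size} holds trivially. We maintain three invariants:
\begin{itemize}
\item[(a)] the sets $S_1,\ldots,S_r,W$ are pairwise disjoint;
\item[(b)] each $S_i$ is a color-$i$ clique, and all edges between $S_i$ and $W$ have color $i$;
\item[(c)] $|W|\ge\bigl((1+\eta)/r\bigr)^s n$.
\end{itemize}
If every $w\in W$ and every $i\in[r]$ satisfy \eqref{eq:regularization-degree}, we stop and output the current sets. Otherwise there are $w\in W$ and $i\in[r]$ with $|N_i(w)\cap W|<(1/r-\eta)|W|-1$.

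The key step is to find a color $j\neq i$ in which $w$ has large degree. The vertex $w$ has exactly $|W|-1$ neighbours in $W$, split among the $r$ colors. Removing color $i$ leaves more than
\[
|W|-1-\Bigl(\tfrac1r-\eta\Bigr)|W|+1=\Bigl(1-\tfrac1r+\eta\Bigr)|W|
\]
neighbours, distributed among $r-1$ colors. By pigeonhole, some color $j\ne i$ satisfies
\[
|N_j(w)\cap W|\ge \frac{(1-1/r+\eta)|W|}{r-1}=\Bigl(\frac1r+\frac{\eta}{r-1}\Bigr)|W|\ge\frac{1+\eta}{r}|W|.
\]
We then update $S_j\leftarrow S_j\cup\{w\}$ and $W\leftarrow N_j(w)\cap W$, which increases $s$ by one.

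Each invariant survives the update:
\begin{itemize}
\item[(a)] Disjointness is preserved because $w\notin N_j(w)$, and the new $W$ is contained in the old one.
\item[(b)] Before the step, $w$ lay in the old $W$, so every edge from $w$ to the old $S_j$ had color $j$. Hence the enlarged $S_j$ is a color-$j$ clique. The new $W$ lies inside the old $W$ and inside $N_j(w)$, so all edges from the new $S_j$ to the new $W$ have color $j$. For $\ell\neq j$, the sets $S_\ell$ are unchanged and the property passes to the subset $W$.
\item[(c)] The reservoir shrinks by a factor of at least $(1+\eta)/r$ while $s$ grows by exactly one, so \eqref{eq:regularization-size} persists.
\end{itemize}

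Termination is immediate, since $|W|$ strictly decreases at each step. At termination, \eqref{eq:regularization-degree} holds by the stopping rule, and this is vacuous if $W$ has become empty. I expect no real obstacle here. The only delicate point is the arithmetic in the pigeonhole step, specifically checking that $\eta/(r-1)\ge\eta/r$ gives exactly the growth factor $(1+\eta)/r$ needed for \eqref{eq:regularization-size}.
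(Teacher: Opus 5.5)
Your proof is correct: the pigeonhole step handles the $-1$ in \eqref{eq:regularization-degree} exactly, producing a colour $j\neq i$ with $|N_j(w)\cap W|\ge\frac{1+\eta}{r}|W|$, and the invariants and termination go through as you describe. The paper gives no proof of its own and simply cites Lemma~5.2 of Balister et al.; your argument (move a vertex of low degree in one colour into the spine of a colour in which it has high degree, then shrink the reservoir to that neighbourhood) is the standard greedy Erd\H{o}s--Szekeres regularization on which that lemma rests.
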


For a real Hilbert space $\mathcal H$, we write
$\ip{x}{y}_{\mathcal H}$ for the inner product of $x,y\in\mathcal H$, and
$\norm{x}_{\mathcal H}=\sqrt{\ip{x}{x}_{\mathcal H}}$
for the induced norm. Whenever the underlying Hilbert space is clear from
the context, we omit the subscript.

Let $m\ge1$, and let $\mathcal K_1,\ldots,\mathcal K_m$ be real Hilbert
spaces. We write $\bigotimes_{j=1}^m\mathcal K_j$ for their Hilbert tensor
product over $\R$. If $x_j\in\mathcal K_j$ for every $j\in[m]$, then
$\bigotimes_{j=1}^m x_j=x_1\tensor\cdots\tensor x_m$
denotes the corresponding pure tensor. Its canonical inner product is
determined on pure tensors by
\[
 \ip{\bigotimes_{j=1}^m x_j}{\bigotimes_{j=1}^m y_j}_{\bigotimes_{j=1}^m\mathcal K_j}
 =\prod_{j=1}^m\ip{x_j}{y_j}_{\mathcal K_j}.
\]

For a nonnegative integer $a$ and a real Hilbert space $\mathcal H$, we write
\[
 \mathcal H^{\tensor a}=\bigotimes_{h=1}^a\mathcal H,
 \qquad
 x^{\tensor a}=\bigotimes_{h=1}^a x.
\]
For $a=0$, we use the conventions
$
 \mathcal H^{\tensor0}=\R$ and $
 x^{\tensor0}=1\in\R,
$
where $\R$ is equipped with its usual inner product. An empty product is
interpreted as $1$. In particular, if an exponent $a_i$ below is zero, then
the corresponding tensor factor and scalar factor are both interpreted as
$1$.

The following elementary positivity observation is the analytic starting
point.

\begin{lemma}\label{lem:positive-moments}
Let $r\ge1$ be an integer. Let $U$ and $U'$ be independent copies of a
random variable taking values in a finite set $\mathcal X$. For each
$i\in[r]$, let $\mathcal H_i$ be a real Hilbert space and let
$\sigma_i:\mathcal X\to\mathcal H_i$ be a map. Then, for all nonnegative
integers $a_1,\ldots,a_r$,
\begin{equation}\label{eq:positive-moments}
 \E\left(\prod_{i=1}^r
 \ip{\sigma_i(U)}{\sigma_i(U')}_{\mathcal H_i}^{a_i}\right)
 \ge0.
\end{equation}
\end{lemma}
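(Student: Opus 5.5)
The plan is to rewrite the product of powers of inner products as a single inner product in a tensor product space, and then to recognise the expectation over the independent pair $(U,U')$ as a squared norm.

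First, for each $i$ and each $x,y\in\mathcal X$, the defining property of the canonical inner product on pure tensors gives
\[
 \ip{\sigma_i(x)}{\sigma_i(y)}_{\mathcal H_i}^{a_i}
 =\ip{\sigma_i(x)^{\tensor a_i}}{\sigma_i(y)^{\tensor a_i}}_{\mathcal H_i^{\tensor a_i}}.
\]
When $a_i=0$ both sides equal $1$ under the stated conventions. Applying the same pure-tensor identity once more, now to the space $\mathcal K=\bigotimes_{i=1}^r\mathcal H_i^{\tensor a_i}$, we set
\[
 \Phi(x)=\bigotimes_{i=1}^r\sigma_i(x)^{\tensor a_i}\in\mathcal K,
\]
and obtain
\[
 \prod_{i=1}^r\ip{\sigma_i(x)}{\sigma_i(y)}^{a_i}=\ip{\Phi(x)}{\Phi(y)}_{\mathcal K}
\]
for all $x,y\in\mathcal X$.

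Second, we use that $\mathcal X$ is finite. Write $\mu(x)=\Pp(U=x)$. By independence, and because $U'$ has the same law as $U$,
\[
 \E\ip{\Phi(U)}{\Phi(U')}
 =\sum_{x,y\in\mathcal X}\mu(x)\mu(y)\ip{\Phi(x)}{\Phi(y)}
 =\norm{\sum_{x\in\mathcal X}\mu(x)\Phi(x)}_{\mathcal K}^2 .
\]
The second equality is bilinearity of the inner product applied to a finite sum. The right-hand side is a squared norm, so it is nonnegative, which proves \eqref{eq:positive-moments}.

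There is no serious obstacle. The only point needing care is bookkeeping: checking that iterated tensor products and the zero-exponent conventions ($\mathcal H^{\tensor0}=\R$, $x^{\tensor0}=1$) make the product formula hold factor by factor. Finiteness of $\mathcal X$ means no integrability or Bochner-integral issues arise.
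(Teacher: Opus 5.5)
Your proposal is correct and follows essentially the same route as the paper: the map $\Phi(x)=\bigotimes_{i=1}^r\sigma_i(x)^{\tensor a_i}$ turns the product of powers into a single inner product in $\mathcal K$. Finiteness of $\mathcal X$ together with independence then rewrites the expectation as $\norm{\sum_{x}\mu(x)\Phi(x)}_{\mathcal K}^2\ge0$, with the zero-exponent conventions handled in the same way.
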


\begin{proof}
Set
$
 \mathcal K=\bigotimes_{i=1}^r\mathcal H_i^{\tensor a_i}.
$
For every $x\in\mathcal X$, define
\[
Z(x)=\bigotimes_{i=1}^r\sigma_i(x)^{\tensor a_i}\in\mathcal K.
\]
For $x,y\in\mathcal X$, the defining property of the tensor-product inner
product gives
\begin{align*}
 \ip{Z(x)}{Z(y)}_{\mathcal K}
 =\prod_{i=1}^r
 \ip{\sigma_i(x)^{\tensor a_i}}{\sigma_i(y)^{\tensor a_i}}_{\mathcal H_i^{\tensor a_i}}=\prod_{i=1}^r
 \ip{\sigma_i(x)}{\sigma_i(y)}_{\mathcal H_i}^{a_i},
\end{align*}
where the identity remains valid when some $a_i=0$ by the conventions above.

Let $\mu(x)=\Pp(U=x)=\Pp(U'=x)$ for $x\in\mathcal X$. Since $U$ and $U'$
are independent copies,
\[
 \Pp(U=x,U'=y)=\mu(x)\mu(y)
 \qquad(x,y\in\mathcal X).
\]
Because $\mathcal X$ is finite, we may write the expectation as a finite sum:
\begin{align*}
 \E\left(\prod_{i=1}^r
 \ip{\sigma_i(U)}{\sigma_i(U')}_{\mathcal H_i}^{a_i}\right)
 &=\sum_{x,y\in\mathcal X}\mu(x)\mu(y)\ip{Z(x)}{Z(y)}_{\mathcal K}\\
 &=\ip{\sum_{x\in\mathcal X}\mu(x)Z(x)}
 {\sum_{y\in\mathcal X}\mu(y)Z(y)}_{\mathcal K}\\
 &=\norm{\sum_{x\in\mathcal X}\mu(x)Z(x)}_{\mathcal K}^{2}\ge0.
\end{align*}
This proves \eqref{eq:positive-moments}.
\end{proof}

\section{Higher-order correlation estimates}\label{sec:analytic}

Throughout this section, $\mathrm i$ denotes the imaginary unit, so that
$\mathrm i^2=-1$. For $z\in\C$, we write $\operatorname{Re}(z)$ for the
real part of $z$.

\subsection{The root filter}\label{sec:filters}

Recall that an \emph{entire function} is a complex-valued function that is
holomorphic on all of $\C$. We call an entire function
\emph{positive-coefficient} if every coefficient in its Maclaurin expansion
is a nonnegative real number. Fix an integer $d\ge3$, and define
\[
 E_d(z)=\sum_{n=0}^{\infty}\frac{z^n}{(dn)!}.
\]
The factorial denominator gives the series infinite radius of convergence.
Thus $E_d$ is an entire function with nonnegative Taylor coefficients. Set
\[
 \theta_d=\frac{\pi}{d},
 \qquad
 \eta_d=1-\cos(2\theta_d),
 \qquad
 \gamma_d=1-\cos\theta_d.
\]

\begin{lemma}\label{lem:root-unity}
Let $d\ge3$ be an integer, and let
\[
 \omega_d=\exp\!\left(\frac{2\pi\mathrm i}{d}\right).
\]
Then, for every $u\ge0$,
\begin{align}
 E_d(u^d)
 &=\frac1d\sum_{j=0}^{d-1}\exp(u\omega_d^j),
 \label{eq:positive-root-filter}\\
 E_d(-u^d)
 &=\frac1d\sum_{j=0}^{d-1}
 \exp\!\left(u\exp\!\left(\frac{(2j+1)\pi\mathrm i}{d}\right)\right).
 \label{eq:negative-root-filter}
\end{align}
Consequently, for every $u\ge0$,
\begin{equation}\label{eq:negative-axis-bound}
 |E_d(-u^d)|\le e^{u\cos\theta_d}
\end{equation}
and
\begin{equation}\label{eq:positive-axis-upper}
 E_d(u^d)\le e^u.
\end{equation}
Moreover, for every $u\ge0$ satisfying
\begin{equation}\label{eq:positive-axis-condition}
 u\eta_d\ge\log\bigl(2(d-1)\bigr),
\end{equation}
one has
\begin{equation}\label{eq:positive-axis-lower}
 E_d(u^d)\ge\frac{e^u}{2d}.
\end{equation}
\end{lemma}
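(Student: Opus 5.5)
We will prove the two root-of-unity identities by the standard multisection (filter) argument. Then we will deduce the three inequalities by bounding each exponential term by the modulus of its exponent's real part.

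For \eqref{eq:positive-root-filter}, we expand $\exp(u\omega_d^j)=\sum_{m\ge0}u^m\omega_d^{jm}/m!$. Averaging over $j\in\{0,\dots,d-1\}$ and using
\[
\frac1d\sum_{j=0}^{d-1}\omega_d^{jm}=\1_{d\mid m},
\]
only the terms $m=dn$ survive, each contributing $u^{dn}/(dn)!$. Exchanging the finite sum with the absolutely convergent series is harmless, and this gives $E_d(u^d)$. For \eqref{eq:negative-root-filter}, we set $\zeta=e^{\pi\mathrm i/d}$, so that the points in question are $\zeta\omega_d^j$. The same averaging gives $\sum_{n}(\zeta u)^{dn}/(dn)!$, and since $\zeta^{d}=-1$ this equals $\sum_n(-u^d)^n/(dn)!=E_d(-u^d)$. (Alternatively, we can simply substitute $u\zeta$ for $u$ in the first identity, which holds for complex $u$ by the same computation.)

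The bounds follow from $|\exp(w)|=e^{\operatorname{Re}w}$. In \eqref{eq:negative-root-filter}, the $j$th exponent has real part $u\cos((2j+1)\pi/d)$. Every odd multiple of $\pi/d$ is congruent mod $2\pi$ to an angle of absolute value at least $\pi/d$, so the cosine is at most $\cos\theta_d$. The triangle inequality then gives \eqref{eq:negative-axis-bound}. Similarly, every term in \eqref{eq:positive-root-filter} has modulus at most $e^{u}$, since $\operatorname{Re}\omega_d^j\le1$. Because $E_d(u^d)$ is real, \eqref{eq:positive-axis-upper} follows; it is also immediate from $(dn)!\ge$ the relevant factorials and comparison with $e^u$.

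For the lower bound \eqref{eq:positive-axis-lower}, we isolate the $j=0$ term $e^u/d$. The remaining $d-1$ terms have real parts $u\cos(2\pi j/d)\le u\cos(2\theta_d)=u(1-\eta_d)$. Hence
\[
E_d(u^d)\ge\frac1d\Bigl(e^u-(d-1)e^{u(1-\eta_d)}\Bigr)=\frac{e^u}{d}\Bigl(1-(d-1)e^{-u\eta_d}\Bigr).
\]
Under \eqref{eq:positive-axis-condition} we have $(d-1)e^{-u\eta_d}\le\tfrac12$, which yields $E_d(u^d)\ge e^u/(2d)$. Here we use that $E_d(u^d)$ is real, so it equals the real part of the average.

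Nothing here is a real obstacle. The only point needing care is the angle bookkeeping: checking that $\cos(2\pi j/d)\le\cos(2\pi/d)$ for $1\le j\le d-1$, and that $\cos((2j+1)\pi/d)\le\cos(\pi/d)$ for all $j$. Both follow because these angles, reduced to $[-\pi,\pi]$, have absolute value at least $2\pi/d$ and $\pi/d$ respectively, and cosine is even and decreasing on $[0,\pi]$.
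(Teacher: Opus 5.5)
Your proof is correct and follows the paper's argument essentially step for step. The common ingredients are the roots-of-unity filter for both identities (with $e^{\pi\mathrm i/d}\omega_d^j$ the $d$th roots of $-1$), the triangle inequality with $|e^w|=e^{\operatorname{Re}w}$ for the negative-axis bound, and isolating the $j=0$ term and taking real parts for the lower bound. The only cosmetic difference is in proving $E_d(u^d)\le e^u$: the paper notes that the defining series is a subseries of $\sum_m u^m/m!$ with nonnegative terms, which is what your garbled parenthetical about factorials should say, while your main argument bounds each term by $|\exp(u\omega_d^j)|\le e^u$; both work.
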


\begin{proof}
For every nonnegative integer $m$, the standard roots-of-unity identity gives
\begin{equation}\label{eq:root-orthogonality}
 \frac1d\sum_{j=0}^{d-1}\omega_d^{jm}
 =
 \begin{cases}
  1,&d\mid m,\\
  0,&d\nmid m.
 \end{cases}
\end{equation}
Indeed, if $d\mid m$, every summand is $1$. If $d\nmid m$, then
$\omega_d^m\ne1$, and the geometric-sum formula gives
\[
 \sum_{j=0}^{d-1}\omega_d^{jm}
 =\frac{1-\omega_d^{dm}}{1-\omega_d^m}=0.
\]
Since the exponential series converges absolutely and the sum over $j$ is
finite, we may interchange the two sums. Using
\eqref{eq:root-orthogonality}, we obtain
\begin{align*}
 \frac1d\sum_{j=0}^{d-1}\exp(u\omega_d^j)
=\sum_{m=0}^{\infty}\frac{u^m}{m!}
 \left(\frac1d\sum_{j=0}^{d-1}\omega_d^{jm}\right)=\sum_{n=0}^{\infty}\frac{u^{dn}}{(dn)!}=E_d(u^d).
\end{align*}
This proves \eqref{eq:positive-root-filter}.

For $j=0,\ldots,d-1$, put
\[
 \zeta_j
 =\exp\!\left(\frac{(2j+1)\pi\mathrm i}{d}\right)
 =\exp\!\left(\frac{\pi\mathrm i}{d}\right)\omega_d^j.
\]
The numbers $\zeta_0,\ldots,\zeta_{d-1}$ are precisely the $d$th roots of
$-1$. For every nonnegative integer $m$,
\[
 \frac1d\sum_{j=0}^{d-1}\zeta_j^m
 =\exp\!\left(\frac{\pi\mathrm i m}{d}\right)
 \left(\frac1d\sum_{j=0}^{d-1}\omega_d^{jm}\right).
\]
By \eqref{eq:root-orthogonality}, this expression vanishes unless $m=dn$
for some nonnegative integer $n$. If $m=dn$, it equals
$\exp(\pi\mathrm i n)=(-1)^n$. Therefore
\begin{align*}
 \frac1d\sum_{j=0}^{d-1}\exp(u\zeta_j)
=\sum_{m=0}^{\infty}\frac{u^m}{m!}
 \left(\frac1d\sum_{j=0}^{d-1}\zeta_j^m\right)=\sum_{n=0}^{\infty}\frac{(-1)^nu^{dn}}{(dn)!}=E_d(-u^d),
\end{align*}
which proves \eqref{eq:negative-root-filter}.

For every $j\in\{0,\ldots,d-1\}$,
\[
 \operatorname{Re}(\zeta_j)
 =\cos\!\left(\frac{(2j+1)\pi}{d}\right)
 \le\cos\!\left(\frac\pi d\right)=\cos\theta_d.
\]
Hence,
$
 |\exp(u\zeta_j)|
=\exp\!\left(u\operatorname{Re}(\zeta_j)\right)
 \le e^{u\cos\theta_d}.
$
Applying the triangle inequality to \eqref{eq:negative-root-filter} proves
\eqref{eq:negative-axis-bound}.

It remains to prove the positive-axis estimates. Since all terms in the
defining series of $E_d(u^d)$ are nonnegative,
\[
 E_d(u^d)
 =\sum_{n=0}^{\infty}\frac{u^{dn}}{(dn)!}
 \le\sum_{m=0}^{\infty}\frac{u^m}{m!}=e^u,
\]
which proves \eqref{eq:positive-axis-upper}. For the lower bound, in
\eqref{eq:positive-root-filter} the term corresponding to $j=0$ is $e^u$.
For every $j\in\{1,\ldots,d-1\}$,
\[
 \operatorname{Re}(\omega_d^j)
 =\cos\!\left(\frac{2\pi j}{d}\right)
 \le\cos\!\left(\frac{2\pi}{d}\right)=\cos(2\theta_d).
\]
Since $E_d(u^d)$ is real, taking real parts in
\eqref{eq:positive-root-filter} and using the triangle inequality gives
\[
 dE_d(u^d)
 \ge e^u-(d-1)e^{u\cos(2\theta_d)}
 =e^u\left(1-(d-1)e^{-u\eta_d}\right).
\]
Condition \eqref{eq:positive-axis-condition} implies
$(d-1)e^{-u\eta_d}\le1/2$, and hence
$E_d(u^d)\ge e^u/(2d)$. This proves
\eqref{eq:positive-axis-lower}.
\end{proof}

Define
\[
 u_{r,d}=\max\left\{
 1,
 \frac{\log(2(d-1))}{\eta_d},
 \frac{\log(24d^2r)}{2\gamma_d}
 \right\},
 \qquad
 L_{r,d}=u_{r,d}^d,
\]
and
$a_{r,d}=\exp(-2u_{r,d}\cos\theta_d)$.
The inequalities
\[
 1-\cos x\ge\frac{2x^2}{\pi^2}
 \qquad(0\le x\le\pi)
\]
and
\[
 1-\cos(2x)\ge\frac{8x^2}{\pi^2}
 \qquad(0\le x\le\pi/2)
\]
give
\[
 \gamma_d\ge\frac{2}{d^2},
 \qquad
 \eta_d\ge\frac{8}{d^2}.
\]
Consequently, there is an absolute constant $C_0>0$ such that
\begin{equation}\label{eq:urd-bound}
 u_{r,d}\le C_0d^2\log(2rd).
\end{equation}

Define
\[
 H_{r,d}(z)=1+a_{r,d}E_d(z)^2,
 \qquad
 G_{r,d}(z)=\frac{H_{r,d}(z)-H_{r,d}(-z)}2.
\]
The Taylor coefficients of $H_{r,d}$ are nonnegative, and $G_{r,d}$ is the
odd part of $H_{r,d}$, so its Taylor coefficients are nonnegative as well.
Also, $H_{r,d}(x)\ge1$ for every real $x$.

\begin{lemma}\label{lem:ratio-estimates}
Let $r\ge2$ and $d\ge3$ be integers. Then, for every $x\ge0$,
\[
 0\le\frac{G_{r,d}(x)}{H_{r,d}(x)}\le\frac12.
\]
For every $x<0$,
\[
 \frac{G_{r,d}(x)}{H_{r,d}(x)}\le0.
\]
Moreover, if $y\ge L_{r,d}$, then
\[
 \frac{G_{r,d}(-y)}{H_{r,d}(-y)}\le-r.
\]
\end{lemma}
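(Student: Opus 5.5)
The plan is to write both $G_{r,d}$ and $H_{r,d}$ in terms of $E_d(x)^2$ and $E_d(-x)^2$, and then compare these two quantities using \Cref{lem:root-unity}. Abbreviate $a=a_{r,d}$, $E=E_d$, $\theta=\theta_d$, $\gamma=\gamma_d$, $\eta=\eta_d$. Since $H_{r,d}(z)=1+aE(z)^2$, we have
\[
 G_{r,d}(x)=\frac a2\bigl(E(x)^2-E(-x)^2\bigr),
\]
and $H_{r,d}(x)\ge1>0$ for all real $x$. So every claim is a sign or size statement about the numerator.

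\emph{Case $x\ge0$.} The Taylor coefficients of $E$ are nonnegative, so $|E(-x)|\le E(|x|)=E(x)$. This gives $G_{r,d}(x)\ge0$. For the upper bound, $G_{r,d}(x)\le \tfrac a2E(x)^2\le\tfrac12 H_{r,d}(x)$.

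\emph{Case $x<0$.} Since $G_{r,d}$ is odd, $G_{r,d}(x)=-G_{r,d}(|x|)\le0$ by the first case. Dividing by $H_{r,d}(x)>0$ gives the second claim.

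\emph{Case $y\ge L_{r,d}$.} Put $u=y^{1/d}$, so that $u\ge u_{r,d}$. The claim $G_{r,d}(-y)/H_{r,d}(-y)\le -r$ is equivalent to
\[
 a\bigl(E(y)^2-E(-y)^2\bigr)\ \ge\ 2r\bigl(1+aE(-y)^2\bigr),
\]
that is, to
\[
 aE(y)^2\ \ge\ 2r+(2r+1)\,aE(-y)^2 .
\]
I would bound the two sides separately using \Cref{lem:root-unity}.

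For the left side: since $u\ge u_{r,d}\ge\log(2(d-1))/\eta$, the lower bound \eqref{eq:positive-axis-lower} applies and gives $aE(y)^2\ge a e^{2u}/(4d^2)$. Now $a=e^{-2u_{r,d}\cos\theta}$ and $u\ge u_{r,d}$, so
\[
 2u-2u_{r,d}\cos\theta\ \ge\ 2u\gamma\ \ge\ 2u_{r,d}\gamma\ \ge\ \log(24d^2r).
\]
Hence $ae^{2u}\ge 24d^2r$, and in particular $ae^{2u}/(8d^2)\ge 3r\ge 2r$.

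For the right side: by \eqref{eq:negative-axis-bound}, $aE(-y)^2\le ae^{2u\cos\theta}$. It therefore suffices to check
\[
 \frac{ae^{2u}}{8d^2}\ \ge\ (2r+1)\,ae^{2u\cos\theta},
 \qquad\text{i.e.}\qquad
 e^{2u\gamma}\ \ge\ 8d^2(2r+1).
\]
This holds because $2u\gamma\ge\log(24d^2r)$ and $24r\ge16r+8$ for $r\ge1$. Adding the two halves $ae^{2u}/(8d^2)$ gives the required inequality.

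The only delicate point is the last case. The factor $aE(-y)^2$ need not be small, since it can be as large as $e^{2(u-u_{r,d})\cos\theta}$. So one must compare it against $aE(y)^2$ through the gap $e^{2u\gamma}$, rather than trying to bound it by a constant. The choice of $u_{r,d}$, and in particular its third term $\log(24d^2r)/(2\gamma_d)$, is exactly what makes this comparison work.
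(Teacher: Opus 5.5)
Your proof is correct and follows essentially the paper's argument. The first two claims rest on the same comparison $|E_d(-x)|\le E_d(x)$ plus oddness. The third uses the same three inputs: $E_d(y)\ge e^{u}/(2d)$, $|E_d(-y)|\le e^{u\cos\theta_d}$, and the third term of $u_{r,d}$, which gives $e^{2\gamma_d u}\ge 24d^2r$. The only cosmetic difference is in how the constant term is absorbed. The paper absorbs the $1$ in $H_{r,d}(-y)$ via $a_{r,d}e^{2u\cos\theta_d}\ge1$ and then shows $H_{r,d}(y)/H_{r,d}(-y)\ge 3r$. You instead split $a_{r,d}e^{2u}/(4d^2)$ into two halves, absorbing $2r$ and $(2r+1)a_{r,d}E_d(-y)^2$ separately.
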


\begin{proof}
Since $E_d(x)$ is real for every real $x$,
\[
 H_{r,d}(x)=1+a_{r,d}E_d(x)^2\ge1.
\]
Let $x\ge0$. The nonnegativity of the Taylor coefficients of $E_d$ gives
$|E_d(-x)|\le E_d(x)$.
Therefore, $1\le H_{r,d}(-x)\le H_{r,d}(x)$,
and hence
\[
 \frac{G_{r,d}(x)}{H_{r,d}(x)}
 =\frac12\left(1-\frac{H_{r,d}(-x)}{H_{r,d}(x)}\right)
 \in\left[0,\frac12\right].
\]

If $x<0$, write $x=-z$ with $z>0$. Since $G_{r,d}$ is odd and
$G_{r,d}(z)\ge0$, we have $G_{r,d}(x)\le0$. Because $H_{r,d}(x)>0$, the
second assertion follows.

Now let $y\ge L_{r,d}$. Write $y=u^d$, where $u\ge u_{r,d}$. By the
definition of $u_{r,d}$ and \Cref{lem:root-unity},
\[
 E_d(y)\ge\frac{e^u}{2d}.
\]
Thus
\begin{equation}\label{eq:ratio-H-positive}
 H_{r,d}(y)
 \ge a_{r,d}E_d(y)^2
 \ge a_{r,d}\frac{e^{2u}}{4d^2}.
\end{equation}
On the other hand, \eqref{eq:negative-axis-bound} gives
\[
 H_{r,d}(-y)
 \le1+a_{r,d}e^{2u\cos\theta_d}.
\]
Since $u\ge u_{r,d}$,
$a_{r,d}e^{2u\cos\theta_d}
 =e^{2(u-u_{r,d})\cos\theta_d}\ge1$,
and therefore
\begin{equation}\label{eq:ratio-H-negative}
 H_{r,d}(-y)\le2a_{r,d}e^{2u\cos\theta_d}.
\end{equation}
Combining \eqref{eq:ratio-H-positive} and
\eqref{eq:ratio-H-negative}, we obtain
\[
 \frac{H_{r,d}(y)}{H_{r,d}(-y)}
 \ge\frac{e^{2u(1-\cos\theta_d)}}{8d^2}
 =\frac{e^{2\gamma_du}}{8d^2}.
\]
By the definition of $u_{r,d}$, the last expression is at least $3r$.
Finally,
\[
 \frac{G_{r,d}(-y)}{H_{r,d}(-y)}
 =-\frac12\left(\frac{H_{r,d}(y)}{H_{r,d}(-y)}-1\right)
 \le-\frac{3r-1}{2}\le-r.
\]
This completes the proof.
\end{proof}

We now combine these one-variable functions in $r$ coordinates. Set
\[
 F_{r,d}(x_1,\ldots,x_r)
 =\sum_{j=1}^rG_{r,d}(x_j)\prod_{i\ne j}H_{r,d}(x_i).
\]
Every multivariate Taylor coefficient of $F_{r,d}$ is nonnegative, and
\begin{equation}\label{eq:separator-factorisation}
 F_{r,d}(x_1,\ldots,x_r)
 =\left(\prod_{i=1}^rH_{r,d}(x_i)\right)
 \left(\sum_{j=1}^r\frac{G_{r,d}(x_j)}{H_{r,d}(x_j)}\right).
\end{equation}

\begin{lemma}\label{lem:separator-bounds}
Let $r\ge2$ and $d\ge3$ be integers. The following statements hold.
\begin{enumerate}[label=\textup{(\roman*)}]
\item Let $x=(x_1,\ldots,x_r)\in\R^r$. If $x_k\le-L_{r,d}$ for some
$k\in[r]$, then
$F_{r,d}(x_1,\ldots,x_r)\le-1.$

\item Let $A_1,\ldots,A_r\ge-1$, and put
$M=\max_{i\in[r]}A_i$. Then
\begin{equation}\label{eq:separator-growth}
 F_{r,d}(L_{r,d}A_1,\ldots,L_{r,d}A_r)
 \le D_r\exp\!\left(c_{r,d}(M+1)^{1/d}\right),
\end{equation}
where
$
 D_r=r2^{r-1}$ and $
 c_{r,d}=2ru_{r,d}.
$
\end{enumerate}
\end{lemma}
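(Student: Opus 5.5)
For both parts I would work from the factorisation \eqref{eq:separator-factorisation} and use \Cref{lem:ratio-estimates} coordinatewise. Part (i) is a sign count. Part (ii) reduces to a one-variable bound on $H_{r,d}$ along the rescaled axis.

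\emph{Part (i).} Since $H_{r,d}(x_i)\ge1$ for every real $x_i$, the product $\prod_i H_{r,d}(x_i)$ is at least $1$. I would bound the bracketed sum of ratios $G_{r,d}(x_j)/H_{r,d}(x_j)$ term by term.
\begin{itemize}
\item For the distinguished index $k$ we have $x_k=-y$ with $y\ge L_{r,d}$. The last assertion of \Cref{lem:ratio-estimates} gives a ratio at most $-r$.
\item For every $j\ne k$, the ratio is at most $1/2$ if $x_j\ge0$ and at most $0$ if $x_j<0$.
\end{itemize}
Hence the sum is at most $-r+(r-1)/2=-(r+1)/2\le-1$. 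It is negative, so multiplying it by a product that is at least $1$ can only make it more negative. Therefore $F_{r,d}(x_1,\ldots,x_r)\le-(r+1)/2\le-1$.

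\emph{Part (ii).} Put $x_i=L_{r,d}A_i$. The same ratio bounds give $G_{r,d}(x_j)/H_{r,d}(x_j)\le1/2$ for every $j$, so
\[
F_{r,d}(x)\le\frac r2\prod_{i=1}^r H_{r,d}(x_i).
\]
It therefore suffices to show
\[
H_{r,d}(L_{r,d}A)\le 2\exp\!\bigl(2u_{r,d}(M+1)^{1/d}\bigr)\qquad\text{for each }A\in\{A_1,\ldots,A_r\}.
\]
Multiplying these $r$ bounds gives exactly $D_r\exp(c_{r,d}(M+1)^{1/d})$ with $D_r=r2^{r-1}$ and $c_{r,d}=2ru_{r,d}$.

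I would split into two cases according to the sign of $A$.
\begin{itemize}
\item \emph{Case $A\ge0$.} Write $L_{r,d}A=v^d$ with $v=u_{r,d}A^{1/d}$. By \eqref{eq:positive-axis-upper}, $E_d(v^d)\le e^v$. Also $a_{r,d}\le1$. Hence $H_{r,d}\le1+e^{2v}\le2e^{2v}$. Finally $v\le u_{r,d}(M+1)^{1/d}$ because $A\le M<M+1$.
\item \emph{Case $-1\le A<0$.} Write $L_{r,d}A=-v^d$ with $0\le v\le u_{r,d}$. By \eqref{eq:negative-axis-bound},
\[
a_{r,d}E_d(-v^d)^2\le a_{r,d}e^{2v\cos\theta_d}\le a_{r,d}e^{2u_{r,d}\cos\theta_d}=1,
\]
using the definition of $a_{r,d}$. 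So $H_{r,d}\le2$, which is at most the target since the exponential factor is at least $1$.
\end{itemize}

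The main obstacle is the negative range. A naive bound $|E_d|\le e^{u_{r,d}}$ would give $H_{r,d}\lesssim e^{2u_{r,d}}$, which cannot be absorbed into $\exp(c_{r,d}(M+1)^{1/d})$ when $M+1$ is small. The normalisation $a_{r,d}=\exp(-2u_{r,d}\cos\theta_d)$ is exactly what resolves this, by capping $H_{r,d}$ at $2$ on $[-L_{r,d},0]$. The hypothesis $A_i\ge-1$ matters here: it guarantees $v\le u_{r,d}$.
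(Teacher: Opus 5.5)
Your proposal is correct and follows the paper's proof essentially step for step. Part (i) uses the factorisation, the ratio bounds of \Cref{lem:ratio-estimates}, and $H_{r,d}\ge1$. Part (ii) uses the one-variable bound on $H_{r,d}(L_{r,d}A)$, split by the sign of $A$, with $a_{r,d}$ capping $H_{r,d}$ at $2$ on $[-L_{r,d},0]$. You deviate from the paper only cosmetically, in two places. You bound $F_{r,d}\le\frac r2\prod_iH_{r,d}(x_i)$ directly from $\prod_iH_{r,d}(x_i)>0$, which makes the paper's preliminary case split on the sign of $F_{r,d}$ unnecessary. You also compare each coordinate with $M$ immediately, rather than first summing $(A_i+1)^{1/d}$.
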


\begin{proof}
For every real $z$,
\begin{equation}\label{eq:H-at-least-one}
 H_{r,d}(z)\ge1.
\end{equation}
Suppose first that $x_k\le-L_{r,d}$ for some $k\in[r]$. By
\Cref{lem:ratio-estimates},
\[
 \frac{G_{r,d}(x_k)}{H_{r,d}(x_k)}\le-r.
\]
For every $j\ne k$, the same lemma gives
\[
 \frac{G_{r,d}(x_j)}{H_{r,d}(x_j)}\le\frac12.
\]
Consequently,
\[
 \sum_{j=1}^r\frac{G_{r,d}(x_j)}{H_{r,d}(x_j)}
 \le-r+\frac{r-1}{2}=-\frac{r+1}{2}\le-1.
\]
Using \eqref{eq:separator-factorisation} and
\eqref{eq:H-at-least-one}, we obtain
$
 F_{r,d}(x_1,\ldots,x_r)\le-1.
$
This proves part~\textup{(i)}.

For part~\textup{(ii)}, we first establish the one-variable estimate
\begin{equation}\label{eq:H-growth}
 H_{r,d}(L_{r,d}A)
 \le2\exp\!\left(2u_{r,d}(A+1)^{1/d}\right)
 \qquad(A\ge-1).
\end{equation}
Suppose first that $-1\le A\le0$. Define
$u=u_{r,d}(-A)^{1/d}$. Then $0\le u\le u_{r,d}$ and
$L_{r,d}A=-u^d$. By \eqref{eq:negative-axis-bound},
$
 a_{r,d}|E_d(-u^d)|^2
 \le e^{-2u_{r,d}\cos\theta_d}e^{2u\cos\theta_d}\le1.
$
Thus $H_{r,d}(L_{r,d}A)\le2$, which implies
\eqref{eq:H-growth} in this range.

Now suppose that $A\ge0$. Define $u=u_{r,d}A^{1/d}$, so that
$L_{r,d}A=u^d$. Equation \eqref{eq:positive-axis-upper} gives
$
 E_d(L_{r,d}A)\le e^{u_{r,d}A^{1/d}}.
$
Since $0<a_{r,d}\le1$,
\[
 H_{r,d}(L_{r,d}A)
 \le1+e^{2u_{r,d}A^{1/d}}
 \le2e^{2u_{r,d}(A+1)^{1/d}}.
\]
This proves \eqref{eq:H-growth} for all $A\ge-1$.

Put $\widetilde x_i=L_{r,d}A_i$. If
$F_{r,d}(\widetilde x_1,\ldots,\widetilde x_r)<0$, then
\eqref{eq:separator-growth} is immediate. We may therefore assume that this
quantity is nonnegative. By \eqref{eq:separator-factorisation},
\[
 0\le\sum_{i=1}^r
 \frac{G_{r,d}(\widetilde x_i)}{H_{r,d}(\widetilde x_i)}
 \le\frac r2.
\]
Using \eqref{eq:H-growth}, we obtain
\begin{align*}
 F_{r,d}(\widetilde x_1,\ldots,\widetilde x_r)
 \le\frac r2\prod_{i=1}^rH_{r,d}(\widetilde x_i)\le r2^{r-1}\exp\!\left(
2u_{r,d}\sum_{i=1}^r(A_i+1)^{1/d}\right)\le r2^{r-1}\exp\!\left(2ru_{r,d}(M+1)^{1/d}\right).
\end{align*}
This is \eqref{eq:separator-growth}.
\end{proof}

\subsection{A higher-order correlation theorem}\label{sec:correlation}

\begin{theorem}\label{thm:correlation}
Let $r\ge2$ and $d\ge3$ be integers. Let $U$ and $U'$ be independent
copies of a random variable taking values in a finite set $\mathcal X$.
For each $i\in[r]$, let $\mathcal H_i$ be a real Hilbert space and let
$\sigma_i:\mathcal X\to\mathcal H_i$ be an arbitrary map. Set
$
 Z_i=\ip{\sigma_i(U)}{\sigma_i(U')}.
$
Then there exist $i\in[r]$ and $\lambda\ge-1$ such that
\begin{equation}\label{eq:correlation-tail}
 \Pp\bigl(
  Z_i\ge\lambda,\
  Z_j\ge-1\text{ for every }j\ne i
 \bigr)
 \ge
 \beta_r\exp\!\left(-C_{r,d}(\lambda+1)^{1/d}\right),
\end{equation}
where
\begin{equation}\label{eq:correlation-constants}
 \beta_r=\frac1{4D_r(r+1)},
 \qquad
 C_{r,d}=2c_{r,d}=4ru_{r,d}.
\end{equation}
In particular, there are absolute constants $C_1,C_2>0$ such that
\begin{equation}\label{eq:correlation-constant-bounds}
 C_{r,d}\le C_1rd^2\log(2rd),
 \qquad
 \log(1/\beta_r)\le C_2r.
\end{equation}
\end{theorem}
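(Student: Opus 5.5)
The plan is to test the positive-coefficient separator $F_{r,d}$ against the random vector $(Z_1,\ldots,Z_r)$ and argue by contradiction. Because $\mathcal X$ is finite, each $Z_i$ is bounded. The Maclaurin series of $F_{r,d}$ has nonnegative coefficients and converges absolutely everywhere, so we may expand $\E F_{r,d}(L_{r,d}Z_1,\ldots,L_{r,d}Z_r)$ termwise. The result is a nonnegative combination of $L_{r,d}^{a_1+\cdots+a_r}\E\prod_i Z_i^{a_i}$, and each such moment is nonnegative by \Cref{lem:positive-moments}. Hence
\[
 \E F_{r,d}(L_{r,d}Z_1,\ldots,L_{r,d}Z_r)\ge0 .
\]

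Next I split according to the event $\mathcal G=\{Z_j\ge-1\text{ for all }j\}$.

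On the complement of $\mathcal G$, some $Z_k<-1$, so $L_{r,d}Z_k<-L_{r,d}$. \Cref{lem:separator-bounds}(i) then gives $F_{r,d}\le-1$ there.

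On $\mathcal G$, put $A_i=Z_i\ge-1$ and $M=\max_i Z_i$. \Cref{lem:separator-bounds}(ii) gives $F_{r,d}\le D_re^{c_{r,d}(M+1)^{1/d}}$. Therefore
\[
 0\le-\Pp(\mathcal G^c)+D_r\,\E\bigl[e^{c_{r,d}(M+1)^{1/d}}\1_{\mathcal G}\bigr].
\]

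Now suppose the conclusion fails: for every $i$ and every $\lambda\ge-1$, the probability in \eqref{eq:correlation-tail} is below $\beta_r e^{-C_{r,d}(\lambda+1)^{1/d}}$.

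Taking $\lambda=-1$ (any $i$), the event in \eqref{eq:correlation-tail} is exactly $\mathcal G$, so $\Pp(\mathcal G)<\beta_r$.

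A union bound over which coordinate attains the maximum gives
\[
 \Pp(\mathcal G,\,M\ge\lambda)<r\beta_re^{-C_{r,d}(\lambda+1)^{1/d}} .
\]

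I then use the layer-cake formula for the increasing function $\phi(\lambda)=e^{c_{r,d}(\lambda+1)^{1/d}}$ on $[-1,\infty)$:
\[
 \E[\phi(M)\1_{\mathcal G}]=\Pp(\mathcal G)+\int_{-1}^\infty\phi'(\lambda)\Pp(\mathcal G,M\ge\lambda)\dd\lambda .
\]
Substitute $s=(\lambda+1)^{1/d}$ and use $C_{r,d}=2c_{r,d}$. The integral is then at most $r\beta_r\int_0^\infty c_{r,d}e^{c_{r,d}s}e^{-2c_{r,d}s}\dd s=r\beta_r$. Hence $\E[\phi(M)\1_{\mathcal G}]\le\Pp(\mathcal G)+r\beta_r<(r+1)\beta_r$.

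Plugging these bounds in, and using $\Pp(\mathcal G^c)>1-\beta_r$, gives
\[
 0<-(1-\beta_r)+D_r(r+1)\beta_r=-(1-\beta_r)+\tfrac14<0 ,
\]
which is a contradiction. This establishes \eqref{eq:correlation-tail}, with $\lambda=-1$ already sufficing when $\Pp(\mathcal G)\ge\beta_r$.

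For the constants, $C_{r,d}=4ru_{r,d}\le4C_0rd^2\log(2rd)$ by \eqref{eq:urd-bound}. Also $\log(1/\beta_r)=\log\bigl(4r2^{r-1}(r+1)\bigr)=O(r)$.

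The main obstacle I expect is the tail-integration step. The restriction to $\mathcal G$ must be carried through the layer-cake formula; this is why the hypothesis has to control the joint events $\{Z_i\ge\lambda,\ Z_j\ge-1\ \forall j\neq i\}$ rather than the marginal tails. The factor $2$ in $C_{r,d}=2c_{r,d}$ is exactly what makes the resulting exponential integral converge to $1$.
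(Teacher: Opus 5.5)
Your proposal is correct and follows the paper's proof essentially step for step. You use the positivity of $\E F_{r,d}(L_{r,d}Z_1,\ldots,L_{r,d}Z_r)$ via \Cref{lem:positive-moments}, then split on $\{Z_j\ge-1\ \forall j\}$ using \Cref{lem:separator-bounds}, and finish with a union-bound and layer-cake estimate that exploits $C_{r,d}=2c_{r,d}$ to reach the contradiction $\tfrac14>1-\beta_r$. The differences are cosmetic: you integrate in $\lambda$ and substitute $s=(\lambda+1)^{1/d}$ where the paper integrates directly in $Y=((M+1)_+)^{1/d}$, and you fold the case $\Pp(\mathcal G)\ge\beta_r$ into the contradiction hypothesis rather than treating it separately.
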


\begin{proof}
For brevity, write
$
 L=L_{r,d},
 c=c_{r,d},$ and $
 C=C_{r,d}.
$
Since $\mathcal X$ is finite, each $Z_i$ has finite range. In particular,
the random vector $(LZ_1,\ldots,LZ_r)$ is bounded.

Write the multivariate Taylor expansion of $F_{r,d}$ as
\[
 F_{r,d}(x_1,\ldots,x_r)
 =\sum_{\mathbf a\in\mathbb Z_{\ge0}^{\,r}}
 q_{\mathbf a}\prod_{i=1}^r x_i^{a_i},
 \qquad q_{\mathbf a}\ge0,
\]
where $\mathbf a=(a_1,\ldots,a_r)$ and
$|\mathbf a|=a_1+\cdots+a_r$. To justify taking expectations term by term,
set
\[
 B=\max\left\{
 1,
 L\max_{\substack{i\in[r]\\x,y\in\mathcal X}}
 \left|\ip{\sigma_i(x)}{\sigma_i(y)}\right|
 \right\}.
\]
Then $|LZ_i|\le B$ almost surely for every $i\in[r]$. Since $F_{r,d}$ is
entire and its Taylor coefficients are nonnegative,
\[
 \sum_{\mathbf a\in\mathbb Z_{\ge0}^{\,r}}
 q_{\mathbf a}L^{|\mathbf a|}
 \E\left(\left|\prod_{i=1}^rZ_i^{a_i}\right|\right)
 \le
 \sum_{\mathbf a\in\mathbb Z_{\ge0}^{\,r}}
 q_{\mathbf a}B^{|\mathbf a|}
 =F_{r,d}(B,\ldots,B)<\infty.
\]
Thus the Taylor series is absolutely summable in $L^1$, so expectation may
be interchanged with summation. By \Cref{lem:positive-moments},
\begin{equation}\label{eq:separator-positive-expectation}
 \E\bigl(F_{r,d}(LZ_1,\ldots,LZ_r)\bigr)
 =\sum_{\mathbf a}q_{\mathbf a}L^{|\mathbf a|}
 \E\left(\prod_{i=1}^rZ_i^{a_i}\right)
 \ge0.
\end{equation}

Let
$\mathcal E=\{Z_j\ge-1\text{ for every }j\in[r]\}$, $\bar{\mathcal E}=\{Z_j<-1\text{ for some }j\in[r]\}$
and
 $M=\max_{j\in[r]}Z_j$,
and define
\[
 Y=\bigl((M+1)_+\bigr)^{1/d},
 \qquad
 (t)_+=\max\{t,0\}.
\]
Thus $Y\ge0$, and on $\mathcal E$ we have $Y=(M+1)^{1/d}$.

If $\bar{\mathcal E}$ occurs, then $Z_k<-1$ for some $k\in[r]$, and hence
$LZ_k<-L$. Therefore, \Cref{lem:separator-bounds}(i) gives
\[
 F_{r,d}(LZ_1,\ldots,LZ_r)\le-1
 \qquad\text{on }\bar{\mathcal E}.
\]
On the other hand, on $\mathcal E$ all coordinates $Z_j$ are at least
$-1$. Applying \Cref{lem:separator-bounds}(ii) with $A_j=Z_j$ gives
\[
 F_{r,d}(LZ_1,\ldots,LZ_r)\le D_re^{cY}
 \qquad\text{on }\mathcal E.
\]
Consequently,
\[
 F_{r,d}(LZ_1,\ldots,LZ_r)
 \le D_re^{cY}\1_{\mathcal E}-\1_{\bar{\mathcal E}}.
\]
Taking expectations and using
\eqref{eq:separator-positive-expectation}, we obtain
\begin{equation}\label{eq:correlation-master}
 D_r\E\bigl[e^{cY}\1_{\mathcal E}\bigr]
 \ge1-\Pp(\mathcal E).
\end{equation}

For $i\in[r]$ and $t\ge-1$, put
$
 \mathcal A_i(t)
 =\{Z_i\ge t,\ Z_j\ge-1\text{ for every }j\ne i\}.
$
If $\Pp(\mathcal E)\ge\beta_r$, choose $\lambda=-1$. Then
$\mathcal A_i(-1)=\mathcal E$ for every $i\in[r]$, while
$(\lambda+1)^{1/d}=0$. Hence \eqref{eq:correlation-tail} follows.

It remains to consider the case $\Pp(\mathcal E)<\beta_r$. Suppose, for a
contradiction, that \eqref{eq:correlation-tail} fails for every $i\in[r]$
and every $\lambda\ge-1$. Thus
\begin{equation}\label{eq:correlation-failure}
 \Pp\bigl(\mathcal A_i(\lambda)\bigr)
 <\beta_re^{-C(\lambda+1)^{1/d}}
\end{equation}
for every $i\in[r]$ and every $\lambda\ge-1$.

Fix $u\ge0$ and set $\lambda_u=u^d-1$. Then $\lambda_u\ge-1$ and
$(\lambda_u+1)^{1/d}=u$. Moreover, on $\mathcal E$,
 $Y\ge u$ if and only if
 $M\ge\lambda_u$.
Therefore
\[
 \mathcal E\cap\{Y\ge u\}
 =\bigcup_{i=1}^r\mathcal A_i(\lambda_u).
\]
The union bound and \eqref{eq:correlation-failure} yield
\begin{equation}\label{eq:correlation-Y-tail}
 \Pp\bigl(\mathcal E\cap\{Y\ge u\}\bigr)
 <r\beta_re^{-Cu}.
\end{equation}

For every $y\ge0$,
$e^{cy}=1+c\int_0^\infty e^{cu}\1_{\{y>u\}}\,\dd u.$
Multiplying by $\1_{\mathcal E}$, taking expectations, and applying
Tonelli's theorem gives
\begin{align*}
 \E\bigl[e^{cY}\1_{\mathcal E}\bigr]
 &=\Pp(\mathcal E)
 +c\int_0^\infty e^{cu}
 \Pp\bigl(\mathcal E\cap\{Y>u\}\bigr)\,\dd u\\
 &\le\Pp(\mathcal E)
 +c\int_0^\infty e^{cu}
 \Pp\bigl(\mathcal E\cap\{Y\ge u\}\bigr)\,\dd u\\
 &<\beta_r+r\beta_rc
 \int_0^\infty e^{-(C-c)u}\,\dd u\\
 &=\beta_r+r\beta_r\frac{c}{C-c}
 =(r+1)\beta_r
 =\frac1{4D_r}.
\end{align*}
Here we used $C=2c>c>0$. Consequently,
$D_r\E\bigl[e^{cY}\1_{\mathcal E}\bigr]<\frac14$.
On the other hand, \eqref{eq:correlation-master} and
$\Pp(\mathcal E)<\beta_r$ give
$D_r\E\bigl[e^{cY}\1_{\mathcal E}\bigr]
 >1-\beta_r$.
Since $r\ge2$ and $D_r=r2^{r-1}\ge4$,
\[
 \beta_r=\frac1{4D_r(r+1)}\le\frac1{48}<\frac14,
\]
and hence $1-\beta_r>3/4$, a contradiction. This proves
\eqref{eq:correlation-tail}.

Finally, \eqref{eq:urd-bound} and $C_{r,d}=4ru_{r,d}$ give
$C_{r,d}\le4C_0rd^2\log(2rd),$
so one may take $C_1=4C_0$. Moreover,
\[
 \log(1/\beta_r)
 =\log\bigl(4r2^{r-1}(r+1)\bigr)
 =(r+1)\log2+\log r+\log(r+1)
 \le3r
\]
for every $r\ge2$, so one may take $C_2=3$.
\end{proof}

\section{Density increments and monochromatic books}\label{sec:book}

\subsection{A density-increment lemma}\label{sec:density}

For later use, we record the correlation theorem in a parameterized form.
Let $r\ge2$ and $d\ge3$ be integers, and let $0<\beta\le1$ and $C>0$.
We say that $\mathcal G_r^{(d)}(\beta,C)$ holds if the following property is
satisfied. For every finite set $\mathcal X$, every pair $U,U'$ of
independent copies of a random variable taking values in $\mathcal X$, and
every collection of maps $\sigma_i:\mathcal X\to\mathcal H_i$ into real
Hilbert spaces, set
$
 Z_i=\ip{\sigma_i(U)}{\sigma_i(U')}
$, where $i\in[r].
$
Then there exist $i\in[r]$ and $\lambda\ge-1$ such that
\[
 \Pp\bigl(
  Z_i\ge\lambda,
  Z_j\ge-1\text{ for every }j\ne i
 \bigr)
 \ge\beta\exp\!\left(-C(\lambda+1)^{1/d}\right).
\]
Thus \Cref{thm:correlation} states that
$\mathcal G_r^{(d)}(\beta_r,C_{r,d})$ holds.

Recall that, for nonempty finite vertex sets $A$ and $B$,
\[
 p_i(A,B)=\min_{v\in A}\frac{|N_i(v)\cap B|}{|B|}
\]
is the minimum relative color-$i$ degree from $A$ into $B$.

\begin{lemma}\label{lem:key}
Let $r\ge2$ and $d\ge3$ be integers, let $0<\beta\le1$ and $C>0$, and
assume that $\mathcal G_r^{(d)}(\beta,C)$ holds. Let
$X,Y_1,\ldots,Y_r$ be nonempty finite vertex sets in an $r$-edge-colored
complete graph, and put
$
 p_i=p_i(X,Y_i)>0
~(i\in[r]).
$
For every choice of positive real numbers $\alpha_1,\ldots,\alpha_r$, there
exist a vertex $x\in X$, an index $\ell\in[r]$, a real number
$\lambda\ge-1$, a nonempty set $X'\subseteq X$, and nonempty sets
$
 Y_i'\subseteq N_i(x)\cap Y_i
~(i\in[r])
$
such that
\begin{equation}\label{eq:key-X}
 |X'|\ge\beta e^{-C(\lambda+1)^{1/d}}|X|,
\end{equation}
\begin{equation}\label{eq:key-selected}
 p_\ell(X',Y_\ell')\ge p_\ell+\lambda\alpha_\ell,
\end{equation}
and, for every $i\in[r]$,
$
 |Y_i'|=p_i|Y_i|,
 p_i(X',Y_i')\ge p_i-\alpha_i.
$
\end{lemma}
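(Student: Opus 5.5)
The plan is to apply $\mathcal G_r^{(d)}(\beta,C)$ directly, with $U,U'$ independent uniform random vertices of $X$. Each map $\sigma_i$ is chosen so that $Z_i=\ip{\sigma_i(U)}{\sigma_i(U')}$ measures, in units of $\alpha_i$, how far the color-$i$ density of $U$ into a fixed-size piece of the neighborhood of $U'$ exceeds $p_i$.

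\emph{Integrality and trimming.} Since $p_i=\min_{v\in X}|N_i(v)\cap Y_i|/|Y_i|$ is attained at some vertex, $m_i:=p_i|Y_i|$ is a positive integer. For every $v\in X$ and $i\in[r]$, fix a set $\widetilde N_i(v)\subseteq N_i(v)\cap Y_i$ with $|\widetilde N_i(v)|=m_i$. Take $\mathcal H_i=\R^{Y_i}$ with the standard inner product, and define
\[
\sigma_i(v)=\frac{\1_{\widetilde N_i(v)}-p_i\1_{Y_i}}{\sqrt{\alpha_i m_i}} .
\]
Because $|\widetilde N_i(v)|=|\widetilde N_i(w)|=p_i|Y_i|$, expanding the inner product gives exactly
\[
\ip{\sigma_i(v)}{\sigma_i(w)}=\frac{|\widetilde N_i(v)\cap\widetilde N_i(w)|-p_i^2|Y_i|}{\alpha_i m_i}.
\]
Hence $Z_i\ge\mu$ is equivalent to
\[
\frac{|\widetilde N_i(U)\cap\widetilde N_i(U')|}{m_i}\ge p_i+\mu\alpha_i ,
\]
that is, the color-$i$ density of $U$ into $\widetilde N_i(U')$ is at least $p_i+\mu\alpha_i$. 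The trimming to exact size is what makes the cross terms cancel. I expect this to be the only delicate point of the proof, and the observation that $p_i|Y_i|$ is an integer is what makes it work.

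\emph{Applying the hypothesis.} By $\mathcal G_r^{(d)}(\beta,C)$ there are $\ell\in[r]$ and $\lambda\ge-1$ with
\[
\Pp\bigl(Z_\ell\ge\lambda,\ Z_j\ge-1\text{ for every }j\ne\ell\bigr)\ge\beta e^{-C(\lambda+1)^{1/d}} .
\]
Conditioning on $U'$, some value $x\in X$ of $U'$ has conditional probability at least this quantity. Put
\[
X'=\bigl\{v\in X:\ \ip{\sigma_\ell(v)}{\sigma_\ell(x)}\ge\lambda,\ \ip{\sigma_j(v)}{\sigma_j(x)}\ge-1\text{ for every }j\ne\ell\bigr\}
\]
and $Y_i'=\widetilde N_i(x)$. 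Since $U$ is uniform on $X$, we get $|X'|\ge\beta e^{-C(\lambda+1)^{1/d}}|X|>0$, which is \eqref{eq:key-X} and shows that $X'$ is nonempty. By construction $Y_i'\subseteq N_i(x)\cap Y_i$ and $|Y_i'|=p_i|Y_i|$.

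\emph{Verifying the density bounds.} For $v\in X'$, the inequality $Z_\ell\ge\lambda$ gives $|N_\ell(v)\cap Y_\ell'|\ge|\widetilde N_\ell(v)\cap Y_\ell'|\ge(p_\ell+\lambda\alpha_\ell)|Y'_\ell|$, which is \eqref{eq:key-selected}. For $j\ne\ell$, the inequality $Z_j\ge-1$ gives density at least $p_j-\alpha_j$. For $i=\ell$ we also need $p_\ell(X',Y_\ell')\ge p_\ell-\alpha_\ell$; this follows from $\lambda\ge-1$. This gives every required inequality, so the argument is essentially bookkeeping once the centered, trimmed embedding $\sigma_i$ is in place.
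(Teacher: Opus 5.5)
Your proposal is correct and matches the paper's proof essentially step for step. It uses the same trimmed sets of size $m_i=p_i|Y_i|$, the same centered embedding normalized by $\sqrt{\alpha_i p_i|Y_i|}$, the same averaging argument to find a vertex $x$ whose set of ``good partners'' is large, and the same density verification. The only difference is cosmetic: you fix the value of $U'$ where the paper fixes $U$, which is immaterial because the inner product is symmetric.
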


\begin{proof}
For each $i\in[r]$, set
$
 m_i=p_i|Y_i|
 =\min_{v\in X}|N_i(v)\cap Y_i|.
$
Since $X$ and $Y_i$ are finite and $p_i>0$, the number $m_i$ is a positive
integer. For every $v\in X$, choose a set
$A_i(v)\subseteq N_i(v)\cap Y_i,
 |A_i(v)|=m_i.
$
Regard the indicator of a subset of $Y_i$ as a vector in the real Hilbert
space $\R^{Y_i}$ with its standard inner product. Define
\[
 \sigma_i(v)=
 \frac{\1_{A_i(v)}-p_i\1_{Y_i}}
 {\sqrt{\alpha_ip_i|Y_i|}},
 \qquad v\in X.
\]
The denominator is nonzero because $\alpha_i>0$, $p_i>0$, and
$Y_i\ne\emptyset$. For $u,v\in X$, using
$|A_i(u)|=|A_i(v)|=p_i|Y_i|$, we obtain
\begin{equation}\label{eq:key-inner-product}
 \ip{\sigma_i(u)}{\sigma_i(v)}
 =\frac{|A_i(u)\cap A_i(v)|-p_i^2|Y_i|}
 {\alpha_ip_i|Y_i|}.
\end{equation}

Let $U,U'$ be independent uniform random variables on $X$. By
$\mathcal G_r^{(d)}(\beta,C)$, there exist $\ell\in[r]$ and
$\lambda\ge-1$ such that, on writing
$q=\beta\exp\!\left(-C(\lambda+1)^{1/d}\right)$,
we have
\[
 \Pp\bigl(
  \ip{\sigma_\ell(U)}{\sigma_\ell(U')}\ge\lambda,
  \ip{\sigma_i(U)}{\sigma_i(U')}\ge-1
  \text{ for every }i\ne\ell
 \bigr)\ge q.
\]
For each $u\in X$, define
\[
 X_u=\left\{v\in X:
 \begin{array}{l}
  \ip{\sigma_\ell(u)}{\sigma_\ell(v)}\ge\lambda,
  \ip{\sigma_i(u)}{\sigma_i(v)}\ge-1
  \text{ for every }i\ne\ell
 \end{array}
 \right\}.
\]
Since $U$ and $U'$ are independent and uniform on $X$,
\[
 q\le\frac1{|X|}\sum_{u\in X}\frac{|X_u|}{|X|}.
\]
Consequently, there exists $x\in X$ such that $|X_x|\ge q|X|$. Set
$
 X'=X_x,
 Y_i'=A_i(x)
~(i\in[r]).
$
Then \eqref{eq:key-X} holds, and
$
 Y_i'\subseteq N_i(x)\cap Y_i,
 |Y_i'|=p_i|Y_i|.
$
In particular, $X'$ and all the sets $Y_i'$ are nonempty.

For $i\in[r]$, define
\[
 \tau_i=
 \begin{cases}
  \lambda,&i=\ell,\\
  -1,&i\ne\ell.
 \end{cases}
\]
For every $v\in X'$, the definition of $X'$ gives
$\ip{\sigma_i(x)}{\sigma_i(v)}\ge\tau_i$. Using
\eqref{eq:key-inner-product} and $Y_i'=A_i(x)$, we obtain
$
 |A_i(v)\cap Y_i'|
 \ge p_i^2|Y_i|+\tau_i\alpha_ip_i|Y_i|.
$
Since $A_i(v)\subseteq N_i(v)$, division by
$|Y_i'|=p_i|Y_i|$ gives
\[
 \frac{|N_i(v)\cap Y_i'|}{|Y_i'|}
 \ge p_i+\tau_i\alpha_i.
\]
Taking the minimum over $v\in X'$ yields
$
 p_i(X',Y_i')\ge p_i+\tau_i\alpha_i.
$
For $i=\ell$, this is \eqref{eq:key-selected}. Since $\lambda\ge-1$, we
also have $\tau_i\ge-1$ for every $i$, and hence
$p_i(X',Y_i')\ge p_i-\alpha_i$.
\end{proof}

\subsection{The book lemma}\label{sec:book-theorem}

\begin{theorem}\label{thm:book}
Let $r\ge2$ and $d\ge3$ be integers. Let $0<\beta\le1$ and $C>0$, and
assume that $\mathcal G_r^{(d)}(\beta,C)$ holds. Let $t,m$ be positive
integers, let $0<p\le1$, and let $\delta,\lambda_0>0$. Set
\[
 L=\log(1/\delta),
 \qquad
 L_p=\log(1/p),
 \qquad
 \rho=\log(r/\beta),
\]
and suppose that
\[
 0<\delta\le\min\{p/4,1/4\},
 \qquad
 \lambda_0\ge\max\{2,6L\},
 \qquad
 t\ge\lambda_0\delta^{-1/(d-1)}.
\]
Define
\[
 \Pi=\frac{3\delta}{p}+\frac{6LL_p}{\lambda_0},
 \qquad
 \Xi=2\rho+4C\lambda_0^{1/d}
      +\frac{12CL}{\lambda_0^{(d-1)/d}}.
\]
Let $X,Y_1,\ldots,Y_r$ be nonempty vertex sets in an $r$-edge-colored
complete graph. Suppose that
\begin{equation}\label{eq:book-density-hyp}
 |N_i(x)\cap Y_i|\ge p|Y_i|
 \qquad(x\in X,\ i\in[r]),
\end{equation}
\begin{equation}\label{eq:book-page-hyp}
 |Y_i|\ge p^{-t}e^{\Pi t}m
 \qquad(i\in[r]),
\end{equation}
and
\begin{equation}\label{eq:book-reservoir-hyp}
 |X|\ge2rt\,e^{rt\Xi}.
\end{equation}
Then there exist an index $i\in[r]$, a color-$i$ clique $T\subseteq X$
with $|T|=t$, and a set $P\subseteq Y_i\setminus T$ with $|P|=m$ such
that every edge between $T$ and $P$ has color $i$. In particular, $(T,P)$
is a color-$i$ $(t,m)$-book.
\end{theorem}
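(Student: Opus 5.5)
We run the standard multicolor book algorithm of \cite{BalisterEtAl2026}, with \Cref{lem:key} in place of the square-root correlation step. Throughout we maintain a current state $(X,Y_1,\ldots,Y_r,T_1,\ldots,T_r)$. Here $T_i$ is a color-$i$ clique built from vertices removed from $X$, every edge between $T_i$ and $X\cup Y_i$ has color $i$, and $Y_i$ is disjoint from all the $T_j$ (we remove spine vertices from the $Y$'s as they are chosen, which costs at most $rt$ vertices). We also track the densities $p_i=p_i(X,Y_i)$.

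\textbf{One step of the algorithm.}
\begin{enumerate}
\item Apply \Cref{lem:key} with $\alpha_i=\alpha(p_i)$. We take $\alpha_i$ of order $\delta$ when $p_i$ is near $p$, and allow a larger step otherwise, so that the cumulative losses stay controlled by $\Pi$. The lemma returns $x$, $\ell$, $\lambda$, $X'$ and $Y_i'=A_i(x)$.
\item The vertex $x$ is joined in color $i$ to all of $Y_i'$, for every $i$. If $\lambda\ge\lambda_0$ or so, we call the step a \emph{big step}. In that case we add $x$ to $T_\ell$ only if it is joined in color $\ell$ to all of $X'$; otherwise we first restrict $X'$ to $N_\ell(x)\cap X'$.
\item To handle the restriction cleanly, we use the usual device: pass to $X'\cap N_\ell(x)$, which has size at least roughly $|X'|/r$ after a pigeonhole over colors. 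This is what contributes the $\rho=\log(r/\beta)$ term.
\end{enumerate}

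\textbf{Accounting.} Each step multiplies $|Y_i|$ by $p_i$ and lowers $p_i$ by at most $\alpha_i$, except that $p_\ell$ rises by $\lambda\alpha_\ell$. Run the process until some $|T_i|=t$, and bound two budgets.

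\emph{(a) Page budget.} We bound $\prod p_i^{(s)}$ over the steps that add to $T_i$, from below by $p^t e^{-\Pi t}$. Since the density of color $i$ only drops by $\alpha_i$ per step and rises by $\lambda\alpha_i$ in boosts, a potential argument bounds the total drop. Here $\delta\le p/4$ gives the $3\delta/p$ term. The term $6LL_p/\lambda_0$ comes from capping how far densities can fall: each drop by a factor $e$ in $p_i$ must be repaid by about $L/\lambda_0$ boost steps.

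\emph{(b) Reservoir budget.} The number of steps is at most $rt$ spine steps plus the boost steps. Each step multiplies $|X|$ by at least $\beta e^{-C(\lambda+1)^{1/d}}/r$. We split these factors as follows.
\begin{enumerate}
\item Steps with $\lambda\le\lambda_0$ cost at most $\rho+C(\lambda_0+1)^{1/d}$. This gives the $2\rho+4C\lambda_0^{1/d}$ terms, counting over at most $2rt$ steps.
\item Boosts with large $\lambda$ have total $\sum\lambda\alpha\le$ the total density range. Concavity of $\lambda\mapsto\lambda^{1/d}$ then turns the total $\sum(\lambda+1)^{1/d}$ into about $\lambda_0^{1/d}$ per step plus $L/\lambda_0^{(d-1)/d}$. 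This is the third term of $\Xi$, using $\lambda^{1/d}\le\lambda/\lambda_0^{(d-1)/d}$ for $\lambda\ge\lambda_0$.
\end{enumerate}
The condition $t\ge\lambda_0\delta^{-1/(d-1)}$ ensures the number of boost steps is at most $O(t)$. The reason is that each boost raises the density by $\lambda_0\alpha$ while the total available range is $O(1)$, and the scale of $\alpha$ is tied to $\delta$.

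\textbf{Conclusion.} With $|X|\ge 2rt e^{rt\Xi}$, $X$ never becomes smaller than $2rt$ before termination, so the process can continue until some $T_i$ reaches size $t$. At that point $|Y_i|\ge p^{-t}e^{\Pi t}m\cdot p^t e^{-\Pi t}=m$, and we take $P\subseteq Y_i$ of size exactly $m$.

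\textbf{Main obstacle.} The main obstacle is making the density potential argument in (a) and (b) consistent. We must choose $\alpha_i$ (likely $\alpha_i=\delta$, or proportional to $p_i-p$ plus $\delta$) so that densities never drop below $p/2$ during a run of $t$ spine steps while the boost count stays $O(t)$. I would first try fixed $\alpha_i=\delta$ together with the variable cap $\lambda\le\lambda_0$ on what counts as a boost, and check that the constants $3,6,12$ close.
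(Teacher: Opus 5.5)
There is a genuine gap. The density bookkeeping, which is the heart of the proof, is left as your ``main obstacle'', and the choice you would try first fails. With a fixed step $\alpha_i=\delta$, $t$ spine extensions in color $i$ may lower $p_i$ by $t\delta$. The hypotheses put no upper bound on $t$, so this can far exceed $p$. Keeping $p_i$ above $p/2$ is also not enough. If $p_i$ falls to $p(1-\xi)$, the pages lose an extra factor $(1-\xi)^{t}$, and the term $3\delta/p$ in $\Pi$ only pays for $\xi=O(\delta/p)$, i.e.\ a total drop of $O(\delta)$. That forces steps of size about $\delta/t$. But with a fixed $\alpha_i=\delta/t$, each boost raises $p_\ell$ by only about $\lambda_0\delta/t$. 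So up to $t/(\lambda_0\delta)$ boosts could occur, possibly far more than $t$, and each one shrinks a page set by a factor of about $p$.

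The missing idea is to make the step proportional to the current excess density. The paper puts $q_i=p_i-p_0+\delta$, with $p_0$ the initial minimum density, and $\alpha_i=q_i/t$. A spine step in color $i$ multiplies $q_i$ by at least $1-1/t$, a boost in color $i$ multiplies it by at least $1+\lambda/t$, and no other step decreases it. Hence $q_i\ge\frac{\delta}{4}\prod(1+\lambda/t)$. Together with $q_i\le5/4$ this gives $\sum\log(1+\lambda/t)\le3L$ over the boosts in color $i$. Two consequences follow: there are at most $6Lt/\lambda_0$ such boosts (using only $\lambda_0\ge6L$ and $t\ge\lambda_0$), and $p_i\ge p-3\delta/4$ throughout. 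These two facts produce the two terms of $\Pi$. In particular $6LL_pt/\lambda_0$ is just the bound on the number of color-$i$ boosts times $L_p$, because each boost, like each spine step, shrinks $Y_i$ by the factor $p_i\approx p$.

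The step structure is also off. In a big step you add $x$ to $T_\ell$ after passing to $N_\ell(x)\cap X'$, and claim by pigeonhole that this set has size about $|X'|/r$. But pigeonhole only produces some majority color $j$, and $N_\ell(x)\cap X'$ may be empty. In the paper a boost ($\lambda>\lambda_0$) adds no spine vertex at all; it only passes to $X'$ and $Y_\ell'$. A step with $\lambda\le\lambda_0$ extends the spine of the majority color $j$, sets the reservoir to $N_j(x)\cap X'$, and replaces only $Y_j$ by $Y_j'$. Your accounting ``each step multiplies $|Y_i|$ by $p_i$'', applied to every $i$, would cost up to $p^{2rt}$.

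You also misplace the role of the hypothesis $t\ge\lambda_0\delta^{-1/(d-1)}$: it is not what bounds the number of boosts. It controls the cost of a single huge boost. Since $\alpha_i\ge\delta/(4t)$ and densities are at most $1$, every boost has $\lambda\le4t/\delta$, and the hypothesis then gives
\[
 (\lambda+1)^{1/d}\le\frac{4t}{\lambda_0^{(d-1)/d}}\log\left(1+\frac{\lambda}{t}\right)
\]
also when $\lambda>t$. For $\lambda\le t$ this is essentially your bound $\lambda^{1/d}\le\lambda/\lambda_0^{(d-1)/d}$. Summing against the logarithmic budget $3L$ in each color gives the third term of $\Xi$. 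A linear bound via the total density range cannot replace this. It controls $\sum\lambda\alpha_\ell$ rather than $\sum\log(1+\lambda/t)$, and with $\alpha_\ell$ as small as $\delta/(4t)$ it only yields $\sum\lambda\lesssim t/\delta$ per color.

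Finally, deleting spine vertices from the $Y_i$ is unnecessary, since $Y_i'\subseteq N_i(x)$ already excludes $x$. It would also spoil the page count, which has no additive slack.
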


\begin{proof}
We construct the spines one vertex at a time. At time $s$, let $X(s)$ be the
current reservoir, let $Y_i(s)$ be the current page set for color $i$, and
let $T_i(s)$ be the color-$i$ spine constructed so far. Initially,
$
 X(0)=X,
 Y_i(0)=Y_i,$ and $
 T_i(0)=\emptyset.
$
Whenever the relevant sets are nonempty, write
$p_i(s)=p_i\bigl(X(s),Y_i(s)\bigr)$ and $ p_0=\min_{i\in[r]}p_i(0).
$
By \eqref{eq:book-density-hyp}, $p_0\ge p>0$. At every stage at which the
construction is defined, put
\begin{equation}\label{eq:q-alpha}
 q_i(s)=p_i(s)-p_0+\delta,
 \qquad
 \alpha_i(s)=\frac{q_i(s)}{t}.
\end{equation}

We maintain the following structural invariants for every $i\in[r]$:
\begin{equation}\label{eq:book-invariants}
 \begin{aligned}
 &T_i(s)\subseteq X
   \quad\text{and}\quad T_i(s)\text{ is a color-$i$ clique},\\
 &X(s)\subseteq X\cap
   \bigcap_{h=1}^r\bigcap_{v\in T_h(s)}N_h(v),\qquad Y_i(s)\subseteq Y_i\cap\bigcap_{v\in T_i(s)}N_i(v).
 \end{aligned}
\end{equation}
The invariants hold at time $0$. Because the graph is simple, a vertex does
not belong to its own color-$i$ neighborhood. It follows from
\eqref{eq:book-invariants} that $X(s)$ is disjoint from every current spine
and that $T_i(s)\cap Y_i(s)=\emptyset$ for every $i$. Thus a vertex chosen
from $X(s)$ is always a genuinely new spine vertex.

Suppose that $|T_i(s)|<t$ for every $i\in[r]$ and that the current state is
defined with $p_i(s)>0$ and $q_i(s)>0$ for every $i$. Apply
\Cref{lem:key} to the current reservoir and page sets with parameters
$\alpha_i(s)$. We obtain a vertex $x\in X(s)$, a color $\ell\in[r]$, a
number $\lambda\ge-1$, a set $X'\subseteq X(s)$, and sets
$
 Y_i'\subseteq N_i(x)\cap Y_i(s)
~(i\in[r]).
$
We distinguish two cases.

If $\lambda\le\lambda_0$, then the edges from $x$ to
$X'\setminus\{x\}$ have one of the $r$ colors. Hence some $j\in[r]$
satisfies
\[
 |N_j(x)\cap X'|\ge\frac{|X'|-1}{r}.
\]
We extend the color-$j$ spine by setting
\[
 T_j(s+1)=T_j(s)\cup\{x\},
 \qquad
 X(s+1)=N_j(x)\cap X',
 \qquad
 Y_j(s+1)=Y_j',
\]
and set $T_i(s+1)=T_i(s)$ and $Y_i(s+1)=Y_i(s)$ for $i\ne j$.

If $\lambda>\lambda_0$, we perform a \emph{boost step} in color $\ell$ and
set
$
 X(s+1)=X'$
and $Y_\ell(s+1)=Y_\ell'$,
while leaving every spine and every other page set unchanged.

The structural invariants in \eqref{eq:book-invariants} are preserved. For a
boost step, the spines remain unchanged, $X(s+1)\subseteq X(s)$, and only
$Y_\ell(s)$ is replaced by a subset. For a spine-extension step in color
$j$, the induction hypothesis gives
\[
 x\in X(s)\subseteq\bigcap_{v\in T_j(s)}N_j(v),
\]
so $T_j(s)\cup\{x\}$ is a color-$j$ clique. Moreover,
\[
 X(s+1)\subseteq N_j(x)\cap X(s),
 \qquad
 Y_j(s+1)\subseteq N_j(x)\cap Y_j(s),
\]
and all other spines and page sets remain unchanged. Thus, in either type of
step,
\[
 X(s+1)\subseteq X(s),
 \qquad
 Y_i(s+1)\subseteq Y_i(s)
 \quad(i\in[r]).
\]
We next establish quantitative estimates showing, in particular, that none
of the sets or parameters needed above can vanish before a spine reaches
size $t$.

Formally, carry out the procedure only as long as the current reservoir is
nonempty, and stop at the first state in which some spine has size $t$. If a
proposed spine-extension step were to produce an empty new reservoir, stop
provisionally at that step. The estimates below apply to every legal finite
initial segment. The reservoir estimate will show that the provisional
stopping event is impossible.

\medskip
\noindent\emph{Density bookkeeping.}
For $i\in[r]$, let $B_i(s)$ be the set of earlier boost steps in color $i$,
and let $\lambda(q)$ denote the threshold used at step $q$. Also set
$ e_i(s)=|T_i(s)|$.
Restricting the first argument in a minimum relative degree cannot decrease
that degree. Thus, if the color-$j$ spine is extended at step $s$, then
\[
 p_j(s+1)\ge p_j(X',Y_j')\ge p_j(s)-\alpha_j(s),
\]
whereas, for $i\ne j$, the page set is unchanged and
$X(s+1)\subseteq X(s)$, so $p_i(s+1)\ge p_i(s)$. Similarly, if a boost step
is performed in color $\ell$, then
\[
 p_\ell(s+1)\ge p_\ell(s)+\lambda(s)\alpha_\ell(s),
 \qquad
 p_i(s+1)\ge p_i(s)\quad(i\ne\ell).
\]
Consequently, if the color-$i$ spine is extended at step $s$, then
\[
 q_i(s+1)
 \ge q_i(s)-\alpha_i(s)
 =\left(1-\frac1t\right)q_i(s).
\]
If a boost step is performed in color $i$, then
\[
 q_i(s+1)
 \ge q_i(s)+\lambda(s)\alpha_i(s)
 =\left(1+\frac{\lambda(s)}t\right)q_i(s).
\]
In every other case, $q_i(s+1)\ge q_i(s)$. Iterating these one-step
inequalities gives
\[
 q_i(s)
 \ge q_i(0)\left(1-\frac1t\right)^{e_i(s)}
 \prod_{q\in B_i(s)}\left(1+\frac{\lambda(q)}t\right).
\]
Since $q_i(0)\ge\delta$ and $e_i(s)\le t$ up to and including the first
time a spine reaches size $t$,
\begin{equation}\label{eq:q-product}
 q_i(s)
 \ge\delta\left(1-\frac1t\right)^t
 \prod_{q\in B_i(s)}\left(1+\frac{\lambda(q)}t\right).
\end{equation}
The assumptions imply $t\ge\lambda_0\ge2$, and hence
$(1-1/t)^t\ge1/4$. Therefore
\begin{equation}\label{eq:p-lower}
 q_i(s)\ge\frac\delta4,
 \qquad
 p_i(s)=p_0-\delta+q_i(s)
 \ge p_0-\frac{3\delta}{4}
 \ge p-\frac{3\delta}{4}>0.
\end{equation}
In particular, $q_i(s)>0$, so every $\alpha_i(s)$ in
\eqref{eq:q-alpha} is positive.

Since every relative density is at most $1$ and $\delta\le1/4$, we also have
$q_i(s)\le1+\delta\le5/4$. Combining this with \eqref{eq:q-product} yields
\[
 \prod_{q\in B_i(s)}\left(1+\frac{\lambda(q)}t\right)
 \le\frac5\delta.
\]
Because $\delta\le1/4$, we have $\log(5/\delta)\le3L$, and hence
\begin{equation}\label{eq:boost-log-sum}
 \sum_{q\in B_i(s)}
 \log\left(1+\frac{\lambda(q)}t\right)
 \le3L.
\end{equation}
At every boost step, $\lambda(q)>\lambda_0$. Since $t\ge\lambda_0$ and
$\log(1+x)\ge x/2$ for $0\le x\le1$, each summand in
\eqref{eq:boost-log-sum} is at least
\[
 \log\left(1+\frac{\lambda_0}{t}\right)
 \ge\frac{\lambda_0}{2t}.
\]
Consequently,
\begin{equation}\label{eq:boost-count}
 |B_i(s)|\le\frac{6L}{\lambda_0}t\le t.
\end{equation}

\medskip
\noindent\emph{Page-set retention.}
The page set $Y_i(s)$ is replaced only when the color-$i$ spine is extended
or when a boost step is performed in color $i$. On each such occasion,
\Cref{lem:key} gives
\[
 |Y_i(s+1)|=p_i(s)|Y_i(s)|.
\]
Set
$\xi=\frac{3\delta}{4p}.$
Then $0<\xi\le3/16$, and \eqref{eq:p-lower} gives
$
 p_i(s)\ge p-\frac{3\delta}{4}=p(1-\xi).
$
Let $b_i(s)=|B_i(s)|$ and put $u_i(s)=e_i(s)+b_i(s)$. Up to and including
the first state in which some spine reaches size $t$,
\[
 e_i(s)\le t,
 \qquad
 b_i(s)\le\frac{6L}{\lambda_0}t,
 \qquad
 u_i(s)\le2t.
\]
It follows that
$
 |Y_i(s)|
 \ge p^{t+6Lt/\lambda_0}(1-\xi)^{2t}|Y_i(0)|.
$
Since $\log(1-\xi)\ge-2\xi$ for $0\le\xi\le3/16$, we obtain
\begin{align}
 |Y_i(s)|
 &\ge p^t
 \exp\!\left(-\frac{6LL_p}{\lambda_0}t\right)
 \exp\!\left(-\frac{3\delta}{p}t\right)|Y_i(0)|=p^te^{-\Pi t}|Y_i(0)|.
 \label{eq:page-retention}
\end{align}
By \eqref{eq:book-page-hyp}, every page set occurring in the construction
therefore has size at least $m$.

\medskip
\noindent\emph{The cost of boost steps.}
Suppose that a boost step occurs at time $s$ in color $i$. Then
$
 p_i(s+1)\ge p_i(s)+\lambda(s)\alpha_i(s).
$
Since $p_i(s+1)\le1$ and, by \eqref{eq:p-lower},
$\alpha_i(s)=q_i(s)/t\ge\delta/(4t)$, we have
\begin{equation}\label{eq:lambda-upper}
 \lambda(s)\le\frac{4t}{\delta}.
\end{equation}
For every threshold $\lambda>\lambda_0$ arising from a boost step, we claim
that
\begin{equation}\label{eq:root-over-log}
 \frac{(\lambda+1)^{1/d}}
 {\log(1+\lambda/t)}
 \le\frac{4t}{\lambda_0^{(d-1)/d}}.
\end{equation}
If $\lambda\le t$, then $\lambda\ge\lambda_0\ge2$ and
$\log(1+\lambda/t)\ge\lambda/(2t)$. Hence
\[
 \frac{(\lambda+1)^{1/d}}{\log(1+\lambda/t)}
 \le2t\frac{(\lambda+1)^{1/d}}{\lambda}
 \le4t\lambda^{-(d-1)/d}
 \le\frac{4t}{\lambda_0^{(d-1)/d}}.
\]
If $\lambda>t$, then $\log(1+\lambda/t)\ge\log2$. Moreover,
\eqref{eq:lambda-upper}, $t\ge1$, and $\delta\le1$ give
$\lambda+1\le\frac{5t}{\delta}.
$
The hypothesis $t\ge\lambda_0\delta^{-1/(d-1)}$ is equivalent to
$t^{1/d}\delta^{-1/d}
 \le t\lambda_0^{-(d-1)/d}.
$
Since $d\ge3$ and $5^{1/d}/\log2<4$, inequality
\eqref{eq:root-over-log} follows in this case as well.

Multiplying \eqref{eq:root-over-log} by the corresponding logarithms, using
\eqref{eq:boost-log-sum}, and summing over all colors gives
\begin{equation}\label{eq:boost-root-sum}
 \sum_{q\in B(s)}(\lambda(q)+1)^{1/d}
 \le\frac{12rLt}{\lambda_0^{(d-1)/d}},
 \qquad
 B(s)=\bigcup_{i=1}^rB_i(s).
\end{equation}

\medskip
\noindent\emph{Reservoir retention.}
Set
\[
 \varepsilon_0=\frac\beta r
 \exp\!\left(-C(\lambda_0+1)^{1/d}\right).
\]
Then $0<\varepsilon_0\le1$. In a spine-extension step,
$\lambda\le\lambda_0$, so \Cref{lem:key} and the choice of the majority
color give
\begin{align*}
 |X(s+1)|
 \ge\frac{|X'|-1}{r}\ge\frac\beta r
       e^{-C(\lambda_0+1)^{1/d}}|X(s)|-\frac1r\ge\varepsilon_0|X(s)|-1.
\end{align*}
In a boost step, \Cref{lem:key} gives
$
 |X(s+1)|
 \ge\beta e^{-C(\lambda(s)+1)^{1/d}}|X(s)|.
$
Since $\varepsilon_0\le\beta$, this implies
$
 |X(s+1)|
\ge\varepsilon_0e^{-C(\lambda(s)+1)^{1/d}}|X(s)|.
$
For each step $q$, define
\[
 a_q=
 \begin{cases}
  \varepsilon_0,&\text{if a spine is extended at step $q$},\\
  \varepsilon_0e^{-C(\lambda(q)+1)^{1/d}},
  &\text{if a boost step is performed at step $q$},
 \end{cases}
\]
and let $\epsilon_q=1$ in the first case and $\epsilon_q=0$ in the second.
With $R_q=|X(q)|$, both estimates take the form
$
 R_{q+1}\ge a_qR_q-\epsilon_q,$ where $
 0<a_q\le1.
$
Before or at the first state in which some spine reaches size $t$, fewer than
$rt$ spine extensions and at most $rt$ boost steps have occurred. Indeed, at
such a state the number of spine extensions is at most
$
 t+(r-1)(t-1)=rt-r+1<rt,
$
and \eqref{eq:boost-count} bounds the number of boost steps by $rt$. Thus
there are at most $2rt$ factors $a_q$. Furthermore,
$\log(1/\varepsilon_0)
=\rho+C(\lambda_0+1)^{1/d}
\le\rho+2C\lambda_0^{1/d}.
$
Together with \eqref{eq:boost-root-sum}, this gives, for every constructed
initial segment and also for a putative final spine-extension step that would
empty the reservoir,
\begin{align*}
 \prod_{q=0}^{s-1}a_q
 &=\varepsilon_0^s
 \exp\!\left(-C\sum_{q\in B(s)}(\lambda(q)+1)^{1/d}\right)\ge\exp\!\left(
-2rt\bigl(\rho+2C\lambda_0^{1/d}\bigr)
 -\frac{12CrLt}{\lambda_0^{(d-1)/d}}
 \right)\ge e^{-rt\Xi}.
\end{align*}
Iterating the recurrence for $R_q$ yields
\[
 R_s\ge
 \left(\prod_{q=0}^{s-1}a_q\right)R_0
 -\sum_{j=0}^{s-1}\epsilon_j
  \prod_{q=j+1}^{s-1}a_q.
\]
Every product in the sum is at most $1$, and fewer than $rt$ of the
$\epsilon_j$ are nonzero. Therefore, by
\eqref{eq:book-reservoir-hyp},
$
 |X(s)|=R_s
 \ge e^{-rt\Xi}|X(0)|-rt
 \ge rt>0.
$

We may now close the induction implicit in the construction. The last
reservoir estimate rules out a proposed spine extension with an empty new
reservoir. At every stage before a spine reaches size $t$,
\eqref{eq:p-lower} gives $p_i(s)>0$ and $q_i(s)>0$,
\eqref{eq:page-retention} gives $|Y_i(s)|\ge m\ge1$, and the reservoir
estimate gives $|X(s)|>0$. Therefore \Cref{lem:key} is always applicable,
and the construction cannot stop prematurely.

By \eqref{eq:boost-count}, at most $rt$ boost steps can occur in total. If
every spine had size at most $t-1$, fewer than $rt$ spine extensions could
occur. Since every step is of one of these two types, the process cannot
continue indefinitely without producing a spine of size $t$. Hence, for some
color $i$, the construction reaches a state $s$ with $|T_i(s)|=t$.

Set $T=T_i(s)$. By \eqref{eq:book-invariants}, $T\subseteq X$ is a
color-$i$ clique, and
\[
 Y_i(s)\subseteq Y_i\cap\bigcap_{v\in T}N_i(v).
\]
The page-retention estimate \eqref{eq:page-retention} gives
$|Y_i(s)|\ge m$. Choose a set $P\subseteq Y_i(s)$ with $|P|=m$. Since the
graph is simple, the preceding inclusion implies $Y_i(s)\cap T=\emptyset$.
Consequently,
$
 T\subseteq X$ and
 $P\subseteq Y_i\setminus T,$
and every edge between $T$ and $P$ has color $i$. Thus $(T,P)$ has all the
properties asserted in the theorem.
\end{proof}

\section{Proof of the Ramsey bounds}\label{sec:transfer}

\subsection{Retained spines and an entropy estimate}\label{sec:spines}

\begin{lemma}\label{lem:spine-compatibility}
Let $r,k\ge2$ be integers, and let $S_1,\ldots,S_r,W$ be as in
\Cref{lem:regularization}. Write
$
 s_j=|S_j|
~(j\in[r]).
$
Fix $i\in[r]$, and let $(T,P)$ be a color-$i$ book contained in $W$; thus
$T\cup P\subseteq W$.

If $s_i+|T|\ge k$, then $S_i\cup T$ contains a color-$i$ copy of $K_k$. If
$s_i+|T|<k$ and $P$ contains a color-$i$ clique of size
$k-s_i-|T|$, then that clique together with $S_i\cup T$ forms a color-$i$
copy of $K_k$.

Moreover, let $j\ne i$. If $s_j\ge k$, then $S_j$ contains a color-$j$ copy
of $K_k$. If $s_j<k$ and $P$ contains a color-$j$ clique of size $k-s_j$,
then that clique together with $S_j$ forms a color-$j$ copy of $K_k$.
\end{lemma}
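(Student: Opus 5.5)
The statement is purely combinatorial, and I would verify it by directly checking that every pair of vertices in each proposed set is joined in the required color. All the ingredients come from the structural conclusions of \Cref{lem:regularization} and the definition of a color-$i$ book. The first is that $S_i$ is a color-$i$ clique. The second is that $S_1,\ldots,S_r,W$ are pairwise disjoint. The third is that every edge between $S_i$ and $W$ has color $i$. The fourth is that $T$ is a color-$i$ clique and every $T$--$P$ edge has color $i$, with $T\cap P=\emptyset$.

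For the color-$i$ assertions, I would first observe that $S_i\cup T$ is a color-$i$ clique of size $s_i+|T|$. Edges inside $S_i$ and inside $T$ have color $i$ by the clique properties. Edges between $S_i$ and $T$ have color $i$ because $T\subseteq W$. The union is disjoint because $S_i\cap W=\emptyset$. Hence, if $s_i+|T|\ge k$, any $k$ of its vertices give a color-$i$ $K_k$. If instead $Q\subseteq P$ is a color-$i$ clique of size $k-s_i-|T|$, I would check the three new edge types. Edges inside $Q$ have color $i$ by hypothesis. Edges between $Q$ and $T$ have color $i$ by the book property. Edges between $Q$ and $S_i$ have color $i$ since $Q\subseteq P\subseteq W$. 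Disjointness follows from $Q\subseteq P$, $P\cap T=\emptyset$ and $S_i\cap W=\emptyset$. Together these give $|S_i\cup T\cup Q|=k$, so $S_i\cup T\cup Q$ is a color-$i$ $K_k$.

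For $j\ne i$ the argument is the same with $T$ omitted. $S_j$ is a color-$j$ clique, so $s_j\ge k$ yields a color-$j$ $K_k$. If $Q\subseteq P$ is a color-$j$ clique of size $k-s_j$, then $Q\subseteq W$ is disjoint from $S_j$, and all $S_j$--$Q$ edges have color $j$ by \Cref{lem:regularization}. Therefore $S_j\cup Q$ is a color-$j$ clique of size exactly $k$.

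There is no real obstacle here. The only points needing care are the disjointness of the pieces, which is needed so that the sizes add up, and the fact that the cross edges into $P$ are controlled through $P\subseteq W$ rather than through the book structure, which only concerns color $i$ and $T$.
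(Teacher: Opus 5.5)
Your proposal is correct and follows essentially the same route as the paper: both verify the edge colors directly from the conclusions of \Cref{lem:regularization} and the book definition. Both also use the disjointness of $S_i$, $T$, $Q$ (respectively $S_j$, $Q$) to get the exact size $k$, with the cross edges into $P$ handled via $P\subseteq W$.
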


\begin{proof}
By \Cref{lem:regularization}, for every $j\in[r]$, the set $S_j$ is a
color-$j$ clique, $S_j\cap W=\emptyset$, and every edge between $S_j$ and
$W$ has color $j$. Since $(T,P)$ is a color-$i$ book, the sets $T$ and $P$
are disjoint, $T$ is a color-$i$ clique, and every edge between $T$ and $P$
has color $i$.

Because $T\subseteq W$, every edge between $S_i$ and $T$ has color $i$.
It follows that $S_i\cup T$ is a color-$i$ clique of size $s_i+|T|$. Hence,
if $s_i+|T|\ge k$, this clique contains a color-$i$ copy of $K_k$.
Now suppose that $s_i+|T|<k$, and let $Q\subseteq P$ be a color-$i$ clique
of size
$
 |Q|=k-s_i-|T|.
$
The sets $S_i,T,Q$ are pairwise disjoint. The edges inside each of these sets
have color $i$; every edge between $S_i$ and $T\cup Q$ has color $i$ because
$T\cup Q\subseteq W$; and every edge between $T$ and $Q$ has color $i$ by
the book property. Thus $S_i\cup T\cup Q$ is a color-$i$ clique of size
$s_i+|T|+|Q|=k.
$

Finally, let $j\ne i$. If $s_j\ge k$, then the color-$j$ clique $S_j$
contains a color-$j$ copy of $K_k$. Suppose instead that $s_j<k$, and let
$Q\subseteq P$ be a color-$j$ clique of size $k-s_j$. Since $Q\subseteq W$,
the sets $S_j$ and $Q$ are disjoint, and every edge between them has color
$j$. Therefore $S_j\cup Q$ is a color-$j$ clique of size $k$.
\end{proof}

\begin{lemma}\label{lem:entropy-loss}
Let $r,k\ge2$ and $t\ge1$ be integers. Let $s_1,\ldots,s_r$ be nonnegative
integers satisfying
\[
 s_j<k\quad(j\in[r]),
 \qquad
 s=\sum_{j=1}^rs_j<\frac{rt}{4}.
\]
Fix $i\in[r]$ such that $s_i+t<k$, and put
\[
 b_j=
 \begin{cases}
  k-s_i-t,&j=i,\\
  k-s_j,&j\ne i,
 \end{cases}
 \qquad
 B=\sum_{j=1}^rb_j=rk-s-t.
\]
Then
\begin{equation}\label{eq:entropy-loss}
 R(b_1,\ldots,b_r)
 \le r^{rk-s-t}\exp\!\left(-\frac{t^2}{64k}\right).
\end{equation}
\end{lemma}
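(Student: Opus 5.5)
Start from \eqref{eq:off-diagonal-basic}:
\[
 R(b_1,\ldots,b_r)\le\binom{B}{b_1,\ldots,b_r}.
\]
Note that every $b_j\ge1$, since $s_j<k$ and $s_i+t<k$. The task is then to bound this multinomial coefficient by $r^B e^{-\Omega(t^2/k)}$.

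The standard route is probabilistic. The multinomial coefficient divided by $r^B$ is the probability that a uniformly random word of length $B$ over $[r]$ has exactly $b_j$ occurrences of each letter $j$. This is at most the probability that letter $i$ occurs exactly $b_i$ times, which equals
\[
 \binom{B}{b_i}\Big(\tfrac1r\Big)^{b_i}\Big(1-\tfrac1r\Big)^{B-b_i}.
\]
This is the ``one-coordinate'' reduction. It equals the joint probability, so it is bounded above by the marginal binomial point probability.

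Next I bound this binomial point probability with the Chernoff/relative-entropy estimate
\[
 \Pp(\mathrm{Bin}(B,1/r)\le b_i)\le\exp\bigl(-B\,\KL(b_i/B\,\|\,1/r)\bigr),
\]
valid when $b_i\le B/r$, or simply by the lower-tail bound $\exp(-\Delta^2/(2\mu))$. Here $\mu=B/r$ and $\Delta=\mu-b_i$.

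Compute
\[
 \Delta=\frac{rk-s-t}{r}-(k-s_i-t)=t\Big(1-\frac1r\Big)+s_i-\frac sr\ge\frac t2-\frac sr.
\]
The hypothesis $s<rt/4$ gives $s/r<t/4$, so $\Delta\ge t/4>0$. Since $\mu\le k$, the lower-tail bound yields
\[
 \Pp\le\exp\!\left(-\frac{t^2}{32k}\right),
\]
which is better than the required $\exp(-t^2/(64k))$. So the multinomial ratio is at most $e^{-t^2/(64k)}$, and multiplying by $r^B=r^{rk-s-t}$ gives \eqref{eq:entropy-loss}.

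The main obstacle is making sure the tail inequality I cite is exactly right and elementary enough to state. I would use the multiplicative Chernoff lower tail
\[
 \Pp(X\le\mu-\Delta)\le e^{-\Delta^2/(2\mu)}.
\]
It follows from Hoeffding's exponential moment argument, or equivalently from $\KL(q\|p)\ge (p-q)^2/(2p)$ for $q\le p$, which can be verified by differentiating twice in $q$. I would include a one-line justification of the latter.

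A second, minor check is that the point probability is bounded by the tail probability. This is immediate because $\{X=b_i\}\subseteq\{X\le b_i\}$. The extra factor of $2$ of slack also absorbs any looseness if one prefers the cruder bound $\mu\le k$ with $s\ge0$.
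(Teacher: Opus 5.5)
Your proposal is correct and follows essentially the paper's route: you reduce the multinomial coefficient to the binomial marginal of coordinate $i$, bound it by $\exp(-B\,\KL(b_i/B\|1/r))$, lower-bound the relative entropy quadratically via $\phi''(u)\ge 1/u\ge r$ on $[x,q]$, and finish with $\Delta\ge t/4$ and $B\le rk$. The differences are cosmetic: you pass through the Chernoff lower tail instead of the point bound $\binom{B}{xB}\le e^{BH(x)}$, and you keep the full second-derivative bound, which gives $t^2/(32k)$ rather than the paper's slightly lossy $t^2/(64k)$.
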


\begin{proof}
The assumptions $s_j<k$ for all $j$ and $s_i+t<k$ imply that
$b_1,\ldots,b_r$ are positive integers. By
\eqref{eq:off-diagonal-basic},
\begin{equation}\label{eq:entropy-multinomial-bound}
 R(b_1,\ldots,b_r)
 \le\binom{B}{b_1,\ldots,b_r}.
\end{equation}

Let $(M_1,\ldots,M_r)$ have the multinomial distribution with $B$ trials and
equal cell probabilities $1/r$. Then
\begin{equation}\label{eq:multinomial-coordinate}
 r^{-B}\binom{B}{b_1,\ldots,b_r}
 =\Pp(M_1=b_1,\ldots,M_r=b_r)
 \le\Pp(M_i=b_i).
\end{equation}
Set
$
 q=\frac1r$ and
$ x=\frac{b_i}{B}$.
The random variable $M_i$ has distribution $\operatorname{Bin}(B,q)$, and
$xB=b_i$ is an integer. Hence
\[
 \Pp(M_i=b_i)
 =\binom{B}{xB}q^{xB}(1-q)^{(1-x)B}.
\]
Using
\[
 \binom{B}{xB}\le\exp\bigl(BH(x)\bigr),
 \qquad
 H(x)=-x\log x-(1-x)\log(1-x),
\]
we obtain
\begin{equation}\label{eq:binomial-entropy}
 \Pp(M_i=b_i)\le\exp\bigl(-B\KL(x\|q)\bigr),
\end{equation}
where
\[
 \KL(x\|q)
 =x\log\frac{x}{q}+(1-x)\log\frac{1-x}{1-q}.
\]

We next quantify how far the distinguished coordinate lies below its
multinomial mean. Define
\[
 \Delta=\frac Br-b_i.
\]
Since $B=rk-s-t$ and $b_i=k-s_i-t$,
\[
 \Delta
 =s_i+\left(1-\frac1r\right)t-\frac sr
 >\left(1-\frac1r\right)t-\frac t4
 \ge\frac t4.
\]
Thus $b_i<B/r$, so $0<x<q$, and
\begin{equation}\label{eq:entropy-displacement}
 q-x=\frac{\Delta}{B}\ge\frac{t}{4B}.
\end{equation}

For fixed $q$, let
$
 \phi(u)=\KL(u\|q)
 ~(0<u<1).
$
Then
\[
 \phi(q)=\phi'(q)=0,
 \qquad
 \phi''(u)=\frac1{u(1-u)}.
\]
For every $u\in[x,q]$, we have $u\le q=1/r$, and therefore
\[
 \phi''(u)\ge\frac1u\ge r\ge\frac r2.
\]
Taylor's theorem gives
\[
 \KL(x\|q)=\phi(x)\ge\frac r4(x-q)^2.
\]
Combining this bound with \eqref{eq:entropy-displacement}, we find
\[
 B\KL(x\|q)
 \ge\frac r{4B}\Delta^2
 \ge\frac{rt^2}{64B}
 \ge\frac{t^2}{64k},
\]
where the last inequality uses $B=rk-s-t\le rk$. Finally,
\eqref{eq:entropy-multinomial-bound},
\eqref{eq:multinomial-coordinate}, and \eqref{eq:binomial-entropy} give
\[
 R(b_1,\ldots,b_r)
 \le r^B\exp\!\left(-\frac{t^2}{64k}\right)
 =r^{rk-s-t}\exp\!\left(-\frac{t^2}{64k}\right).
\]
This proves \eqref{eq:entropy-loss}.
\end{proof}

\subsection{Choice of parameters and completion of the proof}
\label{sec:transfer-theorem}

For $b\in(0,1)$ and integers $r\ge2$ and $d\ge3$, define
\begin{equation}\label{eq:vartheta}
 \vartheta_{r,d}(b)=
 \frac{b}{
 r^{d/(d-1)}d^{2d/(d-1)}
 \bigl(\log(2rd)\bigr)^{2/(d-1)}}.
\end{equation}
Once $b$ has been fixed, we abbreviate
$\vartheta_{r,d}=\vartheta_{r,d}(b)$.

\begin{theorem}\label{thm:transfer}
There exist absolute constants $b\in(0,1)$ and $c,K>0$ such that the
following holds. Let $r,k\ge2$ and $d\ge3$ be integers satisfying
\[
 d\le\max\{3,\log(2r)\}.
\]
If
\begin{equation}\label{eq:transfer-threshold}
 k\ge
 K r^{(2d^2-1)/(d-1)^2}
 d^{2d(2d-1)/(d-1)^2}
 \bigl(\log(2rd)\bigr)^{2d^2/(d-1)^2},
\end{equation}
then, with $\vartheta_{r,d}=\vartheta_{r,d}(b)$,
\begin{equation}\label{eq:transfer-saving}
 R_r(k)\le
 \exp\!\left(-c\vartheta_{r,d}^2k\right)r^{rk}.
\end{equation}
\end{theorem}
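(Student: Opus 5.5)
We argue by contradiction. Set $n=\lfloor\exp(-c\vartheta_{r,d}^2k)r^{rk}\rfloor$ and fix an $r$-coloring of $K_n$ with no monochromatic $K_k$; the goal is to show that $K_n$ must in fact contain one. First apply \Cref{lem:regularization} with $\eta=\vartheta_{r,d}$ to obtain $S_1,\ldots,S_r,W$. If some $s_j\ge k$ we are done immediately, so assume $s_j<k$ for all $j$. Next fix a spine length
\[
t=\lceil \vartheta_{r,d} k\rceil .
\]
The argument now splits according to whether $s=\sum_j s_j$ is large or small.

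\emph{Case $s\ge rt/4$.} Here regularization alone yields the saving. Inside $W$, it suffices to find a color-$j$ clique of size $k-s_j$ for some $j$, since by \Cref{lem:spine-compatibility} (with $T=\emptyset$, $P=W$) it extends to a $K_k$ using $S_j$. By \eqref{eq:off-diagonal-basic} this is guaranteed once $|W|\ge r^{rk-s}$. Combining this with \eqref{eq:regularization-size} leaves us to verify
\[
(1+\eta)^s r^{-s}n\ge r^{rk-s}.
\]
Since $s\eta\gtrsim \vartheta_{r,d}\, r\,\vartheta_{r,d}k\ge \vartheta_{r,d}^2k$, this holds for suitable $c$.

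\emph{Case $s<rt/4$.} We apply \Cref{thm:book} inside $W$ with $X=Y_i=W$ and $p=1/r-\eta-1/|W|$; this choice makes \eqref{eq:book-density-hyp} follow from \eqref{eq:regularization-degree}. The hypothesis $\mathcal G^{(d)}_r(\beta_r,C_{r,d})$ is supplied by \Cref{thm:correlation}. For the page target we take
\[
m=r^{rk-s-t}\exp(-t^2/(64k)).
\]
This produces a color-$i$ $(t,m)$-book $(T,P)$. If $s_i+t\ge k$ we are done. Otherwise \Cref{lem:entropy-loss} gives $R(b_1,\ldots,b_r)\le m\le|P|$, so $P$ contains either a color-$i$ clique of size $k-s_i-t$ or a color-$j$ clique of size $k-s_j$. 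In either outcome \Cref{lem:spine-compatibility} completes a monochromatic $K_k$.

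\emph{Parameter choice.} It remains to choose $\delta,\lambda_0$ so that \eqref{eq:book-page-hyp} and \eqref{eq:book-reservoir-hyp} both hold. This is where the real work lies.

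For \eqref{eq:book-page-hyp}, note that $p^{-t}\approx r^{t}(1+O(r\eta))^{t}$. The loss $e^{O(r\eta t)+\Pi t}$ must be beaten by the entropy gain $e^{t^2/(64k)}\approx e^{\vartheta_{r,d}t/64}$. Since $r\eta=r\vartheta_{r,d}$ is too large, I would instead take $\eta=\vartheta_{r,d}/(Ar)$ for a large constant $A$. The regularization estimate in the first case then still gives a saving of $s\eta\gtrsim t\vartheta_{r,d}/A\gtrsim\vartheta_{r,d}^2k$. Beyond this, we need $\Pi\le\vartheta_{r,d}/500$, which requires
\[
\delta\approx\vartheta_{r,d}/r \quad\text{and}\quad \lambda_0\approx L\log r/\vartheta_{r,d},
\]
with $L=\log(1/\delta)=O(\log(2rd))$.

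For \eqref{eq:book-reservoir-hyp}, we need $rt\,\Xi\le \tfrac12(s\log(1+\eta)+\ldots)$, or more simply that $|W|\ge n(1+\eta)^s r^{-s}$ dominates $2rt e^{rt\Xi}$. Because $|W|\gtrsim r^{rk-s}e^{-c\vartheta^2k}$ is astronomically larger, we only need
\[
rt\,\Xi\ll rk\log r.
\]
By \eqref{eq:correlation-constant-bounds}, $\Xi=O\bigl(r+rd^2\log(2rd)\lambda_0^{1/d}\bigr)$.

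The main constraint is the condition $t\ge\lambda_0\delta^{-1/(d-1)}$, i.e.
\[
\vartheta_{r,d}k\gtrsim \vartheta_{r,d}^{-1}(r/\vartheta_{r,d})^{1/(d-1)}\log^2(2rd).
\]
The exponents in \eqref{eq:vartheta} and the threshold \eqref{eq:transfer-threshold} are evidently tuned so that this condition, together with $rt\Xi\ll rk\log r$, holds. The main obstacle I anticipate is balancing $\lambda_0^{1/d}d^2\log(2rd)$ in $\Xi$ against $\Pi$. The exponents $d/(d-1)$ and $2d/(d-1)$ should emerge from equating $\lambda_0\sim\vartheta^{-1}$ with $\Xi\cdot t\lesssim k$, and I would solve this system carefully, checking that $b$ is small enough and $K$ large enough.
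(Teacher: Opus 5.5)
\textbf{Verdict: same route as the paper, but incomplete.} The combinatorial skeleton and your parameter scalings match the paper; the quantitative verification, which is most of what this theorem asserts, is claimed rather than carried out.

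\textbf{What matches.} Your outline is the paper's proof: regularize with (your revised) $\eta\asymp\vartheta/r$, and win directly from $(1+\eta)^s$ when $s\ge rt/4$. Otherwise run \Cref{thm:book} inside $W$ with $X=Y_1=\cdots=Y_r=W$, take the page target from \Cref{lem:entropy-loss}, and finish with the retained spines. Your scalings $\delta\asymp\vartheta/r$, $\lambda_0\asymp L\log r/\vartheta$, $t\approx\vartheta k$ and $c\ll 1/A$ correspond to the paper's $\delta=\zeta p\vartheta$, $\lambda_0=AL^2/\vartheta$, $t=\lfloor\vartheta k\rfloor$ and $\gamma\le\alpha/64$. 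Taking the uniform entropy bound as $m$ is a harmless simplification of $m=\max_{i\in I}m_i$, and it even avoids checking $I\ne\emptyset$. You must round it up, since \Cref{thm:book} needs $m\in\N$.

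\textbf{The gap.} Once the book and entropy lemmas are available, the content of this theorem is showing that the specific $\vartheta_{r,d}(b)$ and threshold make \eqref{eq:book-page-hyp}, \eqref{eq:book-reservoir-hyp} and $t\ge\lambda_0\delta^{-1/(d-1)}$ hold with absolute $b,K,c$. You assert this (``evidently tuned'') without doing it. In particular, the hypothesis $d\le\max\{3,\log(2r)\}$ never enters your argument, although the estimate cannot close without it.

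\textbf{The missing reservoir estimate.} What must be shown is $rt\,\Xi\le\frac18 rk\log r$.
\begin{itemize}
\item The main term is $4rtC_{r,d}\lambda_0^{1/d}$. With $t\le\vartheta k$, $C_{r,d}\lesssim rd^2\ell$ (where $\ell=\log(2rd)$) and $\lambda_0\lesssim L^2/\vartheta$, it is $\lesssim r^2d^2\ell k\,\vartheta^{(d-1)/d}L^{2/d}$.
\item The definition \eqref{eq:vartheta} is exactly the identity $\vartheta^{(d-1)/d}rd^2\ell^{2/d}=b^{(d-1)/d}$. So the main term is $\lesssim b^{2/3}(1+\log(1/b))^{2/3}rk\ell$.
\item You need to notice that $L=\log(1/\delta)$ itself grows like $\log(1/b)$. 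The power of $b$ still wins.
\item Converting $rk\ell$ into $rk\log r$ requires $\ell\le C\log r$, and this is exactly what the restriction on $d$ provides. For $d\gg r$ the ratio $\ell/\log r$ is unbounded, and the estimate cannot be closed with an absolute $b$.
\item The $\rho$-term contributes $\lesssim r\vartheta\cdot rk\le b\,rk$. The term $L/\lambda_0^{(d-1)/d}$ is dominated by the main term because $\lambda_0\ge 6L$.
\end{itemize}

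\textbf{The missing threshold and page checks.}
\begin{itemize}
\item Your displayed book-length condition becomes \eqref{eq:transfer-threshold} after substituting $\vartheta$. The exponents combine as $\frac1{d-1}+\frac{d(2d-1)}{(d-1)^2}=\frac{2d^2-1}{(d-1)^2}$ and $2+\frac{2(2d-1)}{(d-1)^2}=\frac{2d^2}{(d-1)^2}$. The leftover loss is $b^{-(2d-1)/(d-1)}\le b^{-5/2}$ times a power of $\log(1/b)$, which $K$ absorbs.
\item The threshold also gives $\vartheta^2k\ge Kb^2$. This is what absorbs the $\log2$ from the floor and the factor $e^{-c\vartheta^2k}$ in both cases. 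Your page accounting omits both.
\end{itemize}

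With these checks, which are the content of the paper's \Cref{lem:uniform-parameters}, your argument is complete.
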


We now choose the absolute constants used in the proof. First take $A\ge12$
sufficiently large. Then choose $0<\alpha,\zeta\le1/8$, and finally choose
$0<\gamma\le\min\{1/8,\alpha/64\}$, so that
\begin{equation}\label{eq:constant-choice}
 4\alpha+3\zeta+\frac6A+2\gamma<\frac1{256}.
\end{equation}
For admissible $r,d,k$, write
\[
 \ell=\log(2rd),
 \qquad
 \vartheta=\vartheta_{r,d}(b),
 \qquad
 \tau=\frac{d-1}{d},
\]
and define
\begin{gather}
 t=\lfloor\vartheta k\rfloor,
 \qquad
 \eta=\frac{\alpha\vartheta}{r},
 \qquad
 p=\frac1r-2\eta=\frac{1-2\alpha\vartheta}{r},
 \notag\\
 \delta=\zeta p\vartheta,
 \qquad
 L=\log(1/\delta),
 \qquad
 L_p=\log(1/p),
 \qquad
 \lambda_0=\frac{AL^2}{\vartheta},
 \qquad
 \Gamma=\gamma\vartheta^2.
 \label{eq:transfer-lambda-gamma}
\end{gather}
For the correlation constants in \eqref{eq:correlation-constants}, put
\begin{equation}\label{eq:transfer-rho-Pi-Xi}
 \rho=\log(r/\beta_r),
 \qquad
 \Pi=\frac{3\delta}{p}+\frac{6LL_p}{\lambda_0},
 \qquad
 \Xi=2\rho+4C_{r,d}\lambda_0^{1/d}
 +\frac{12C_{r,d}L}{\lambda_0^\tau}.
\end{equation}

The next lemma fixes $b$ and verifies, uniformly in all the remaining
parameters, the hypotheses needed in the book argument.

\begin{lemma}\label{lem:uniform-parameters}
There are absolute choices of $b\in(0,1)$ and $K_0\ge1$ such that the
following holds. Let $r\ge2$, let $d$ be an integer satisfying
$
 3\le d\le\max\{3,\log(2r)\},
$
and let $k$ be an integer satisfying
\[
 k\ge
 K_0 r^{(2d^2-1)/(d-1)^2}
 d^{2d(2d-1)/(d-1)^2}
 \bigl(\log(2rd)\bigr)^{2d^2/(d-1)^2}.
\]
Then all of the following statements hold.
\begin{enumerate}[label=\textup{(\roman*)}]
\item The basic parameters satisfy
\begin{equation}\label{eq:uniform-basic}
 0<\vartheta\le\frac18,
 \qquad
 \frac1{2r}\le p\le\frac1r,
 \qquad
 0<\delta\le\min\{p/4,1/4\},
 \qquad
 \eta\le1,
\end{equation}
and
\begin{equation}\label{eq:uniform-logs}
 c_{\log}\ell\le L_p\le L\le C_{\log}\ell,
 \qquad
 \lambda_0\ge\max\{2,6L\},
\end{equation}
for absolute constants $c_{\log},C_{\log}>0$.

\item The integer $t$ satisfies
\begin{equation}\label{eq:uniform-book}
 \frac{\vartheta k}{2}\le t\le\vartheta k,
 \qquad
 t\le\frac k8,
 \qquad
 t\ge\lambda_0\delta^{-1/(d-1)},
\end{equation}
and
\begin{equation}\label{eq:uniform-margins}
 \left(\frac{\alpha}{16}-\gamma\right)\vartheta^2k\ge\log2,
 \qquad
 \frac{\vartheta t}{256}\ge\log2.
\end{equation}

\item We have
\begin{equation}\label{eq:uniform-size}
 e^{-\Gamma k}r^{rk}\ge2,
 \qquad
 \Gamma k+\log2\le\frac1{32}rk\log r,
\end{equation}
and
\begin{equation}\label{eq:uniform-auxiliary}
 \log(1/\eta)\le\frac34rk\log r,
 \qquad
 \log(2rt)\le\frac18rk\log r.
\end{equation}

\item The total reservoir loss satisfies
\begin{equation}\label{eq:uniform-reservoir-exponent}
 rt\Xi\le\frac18rk\log r.
\end{equation}
\end{enumerate}
\end{lemma}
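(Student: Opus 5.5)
The plan is to check each item in turn, using only the explicit formulas in \eqref{eq:vartheta} and \eqref{eq:transfer-lambda-gamma}, the bounds \eqref{eq:urd-bound} and \eqref{eq:correlation-constant-bounds}, and the restriction $d\le\max\{3,\log(2r)\}$. The constants $A,\alpha,\zeta,\gamma$ are already fixed by \eqref{eq:constant-choice}. First I choose $b$ small depending only on them and on $C_1,C_2$, and then $K_0$ large depending on everything else. Since $r\ge2$ and $d\le\max\{3,\log 2r\}$, I will use repeatedly that $\ell=\log(2rd)$ and $\log r$ are comparable up to absolute constants.

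\emph{Logarithmic control.} From \eqref{eq:vartheta},
\[
\vartheta^{-1}\le b^{-1}r^{3/2}d^{3}\ell
\qquad\text{since } d/(d-1)\le 3/2 \text{ and } 2d/(d-1)\le 3,
\]
so $\log(1/\vartheta)=O(\ell)+\log(1/b)$. Since $p\in[1/(2r),1/r]$ once $\alpha\vartheta\le1/4$, it follows that $L=\log(1/(\zeta p\vartheta))$ is at most $C_{\log}\ell$. Also $L_p\ge\log r\ge c_{\log}\ell$, and $L_p\le L$ because $\delta<p$. This gives \eqref{eq:uniform-basic} and \eqref{eq:uniform-logs}. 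The bound $\lambda_0=AL^2/\vartheta\ge\max\{2,6L\}$ is immediate from $\vartheta\le1/8$, $A\ge12$ and $L\ge\log 4$.

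\emph{The threshold.} The key step for (ii) is $t\ge\lambda_0\delta^{-1/(d-1)}$. Using $t\ge\vartheta k/2$ (valid once $\vartheta k\ge2$), it suffices that
\[
k\ge 2AL^2\vartheta^{-(2d-1)/(d-1)}(\zeta p)^{-1/(d-1)} .
\]
I then substitute \eqref{eq:vartheta}, $p\ge1/(2r)$ and $L\le C_{\log}\ell$. The exponents are:
\begin{itemize}
\item for $r$: $\frac{d(2d-1)}{(d-1)^2}+\frac1{d-1}=\frac{2d^2-1}{(d-1)^2}$;
\item for $d$: $\frac{2d(2d-1)}{(d-1)^2}$;
\item for $\ell$: $2+\frac{2(2d-1)}{(d-1)^2}=\frac{2d^2}{(d-1)^2}$.
\end{itemize}
These are exactly the exponents in the hypothesis, so the remaining factors, which are powers of $b,\zeta,2,A,C_{\log}$ with exponents bounded uniformly in $d\ge3$, are absorbed into $K_0$. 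The same hypothesis forces $\vartheta k\ge\vartheta\cdot\vartheta^{-2}K_0\ge K_0$ to be huge. This yields \eqref{eq:uniform-margins}, $t\le\vartheta k\le k/8$, and all of (iii). For (iii), note that $\Gamma k\le\gamma\vartheta^2 k\le rk\log r/(8\cdot 64)$. Moreover $\log(1/\eta)$ and $\log(2rt)$ are $O(\ell+\log k)$, which is negligible compared with $rk\log r$ for $k\ge K_0$.

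\emph{The reservoir exponent, the main obstacle.} Since $t\le\vartheta k$, for \eqref{eq:uniform-reservoir-exponent} it suffices to show $\vartheta\Xi\le\frac18\log r$. I bound the three terms of $\Xi$ separately.
\begin{itemize}
\item By \eqref{eq:correlation-constant-bounds}, $\rho\le C_2r+\log r$. Hence $2\vartheta\rho=O(r\vartheta)=O(b)$, because $r\vartheta\le b r^{-1/(d-1)}\le b$.
\item For the middle term, $C_{r,d}\le C_1rd^2\ell$ and $\lambda_0^{1/d}=(AL^2)^{1/d}\vartheta^{-1/d}$. Hence
\[
\vartheta C_{r,d}\lambda_0^{1/d}\le C_1rd^2\ell\,(AC_{\log}^2\ell^2)^{1/d}\,\vartheta^{\tau}.
\]
The normalisation in \eqref{eq:vartheta} is built precisely so that $\vartheta^{\tau}=b^{\tau}/(rd^2\ell^{2/d})$. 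The whole term therefore collapses to $O(b^{\tau}A^{1/d}\ell)$, which is $O(b^{2/3}\log r)$ because $\tau\ge2/3$.
\item The third term satisfies
\[
\frac{\vartheta C_{r,d}L}{\lambda_0^{\tau}}=C_{r,d}L^{1-2\tau}A^{-\tau}\vartheta^{1+\tau}\le\vartheta C_{r,d}\vartheta^{\tau},
\]
which is smaller than the previous term.
\end{itemize}
Choosing $b$ small enough makes the sum at most $\frac18\log r$. This also covers the requirement $\vartheta\le1/8$ in (i). I expect this exponent bookkeeping, and keeping every constant uniform in $d$ through the factors $(\cdot)^{1/d}$ and $(\cdot)^{1/(d-1)}$, to be the only delicate part. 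Everything else follows once $b$ is fixed and $K_0$ is taken large.
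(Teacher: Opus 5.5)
Your route is the paper's route almost line by line. You use the same exponent bookkeeping to reduce $t\ge\lambda_0\delta^{-1/(d-1)}$ to the stated threshold, the same balancing identity $\vartheta^\tau rd^2\ell^{2/d}=b^\tau$, the same three-term split of $\vartheta\Xi$, and the same order of fixing $b$ first and then $K_0$. Your bound $\vartheta C_{r,d}L\lambda_0^{-\tau}\le C_{r,d}\vartheta^{1+\tau}$ for the third term is a harmless simplification of the paper's $(\ell/L)^{(d-2)/d}$ step. There is, however, one step that is circular as written.

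\emph{The constant $C_{\log}$ depends on $b$.} You yourself note $\log(1/\vartheta)=\log(1/b)+O(\ell)$. So what you actually get is $L\le C(1+\log(1/b))\ell$ with $C$ absolute; the constant $C_{\log}$ becomes absolute only after $b$ is fixed. In the middle term you obtain $\vartheta C_{r,d}\lambda_0^{1/d}\le C_1A^{1/d}C_{\log}^{2/d}b^\tau\ell$. You then call this $O(b^{2/3}\log r)$ with an implied constant independent of $b$, and choose $b$ small. But the hidden factor $C_{\log}^{2/d}$ grows like $(1+\log(1/b))^{2/d}$, so ``choose $b$ small enough'' uses a constant that itself depends on $b$.

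The argument survives because the growth is only logarithmic. The term is at most $C A^{1/3}b^{2/3}(1+\log(1/b))^{2/3}\log r$, which still tends to $0$ as $b\downarrow0$. This is exactly why the paper introduces $B_b=1+\log(1/b)$ and carries it explicitly into the final estimate $C\bigl(b+A^{1/3}b^{2/3}B_b^{2/3}+A^{-2/3}b^{2/3}\bigr)$. You need to do the same for your choice of $b$ to be legitimate.

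\emph{A smaller point on \eqref{eq:uniform-margins}.} Both inequalities in \eqref{eq:uniform-margins} need $\vartheta^2k$, not merely $\vartheta k$, to be large. Your claim ``$k\ge\vartheta^{-2}K_0$'' is not what the hypothesis gives. Write $\mathcal T_{r,d}$ for the quantity multiplying $K_0$ in the hypothesis. One checks
\[
\vartheta^2\mathcal T_{r,d}=b^2r^{(2d-1)/(d-1)^2}d^{2d/(d-1)^2}\ell^{2(d^2-2d+2)/(d-1)^2}\ge b^2,
\]
so the hypothesis yields $\vartheta^2k\ge K_0b^2$. This exponent comparison should be written out. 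Since $K_0$ is chosen after $b$, the missing factor $b^2$ is then absorbed and the rest of (ii) and (iii) goes through as you describe.
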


\begin{proof}
Set $\ell_r=\log(2r)$. The restriction on $d$ implies that
\begin{equation}\label{eq:ell-log-comparison}
 \ell_r\le\ell=\log(2rd)\le C_{\ell}\ell_r
\end{equation}
for an absolute constant $C_{\ell}>0$. Indeed, if $\ell_r<3$, then $d=3$,
and the assertion follows after increasing $C_{\ell}$. If $\ell_r\ge3$,
then $d\le\ell_r$, and hence
$
 \ell=\ell_r+\log d
 \le\ell_r+\log\ell_r
 \le2\ell_r.
$
Since $r\ge2$, it also follows that $\ell\le C\log r$ for an absolute
constant $C>0$.

We first require $b\in(0,1)$ to be small enough that
\[
 \vartheta\le\frac18,
 \qquad
 2\alpha\vartheta\le\frac12,
 \qquad
 \zeta\vartheta\le\frac14
\]
for every admissible pair $(r,d)$. Then
\[
 \frac1{2r}\le
 p=\frac{1-2\alpha\vartheta}{r}
 \le\frac1r,
\]
and
$
 0<\delta=\zeta p\vartheta
 \le\min\{p/4,1/4\}.
$
Also, $\eta=\alpha\vartheta/r\le1$. Thus
\eqref{eq:uniform-basic} holds.

Put $B_b=1+\log(1/b)$. Since
$1/2\le1-2\alpha\vartheta\le1$,
\[
 \log r\le L_p
 =\log r-\log(1-2\alpha\vartheta)
 \le\log r+\log2.
\]
Moreover,
$L=\log(1/\zeta)+L_p+\log(1/\vartheta),
$
where
\[
 \log(1/\vartheta)
 =\log(1/b)
 +\frac{d}{d-1}\log r
 +\frac{2d}{d-1}\log d
 +\frac{2}{d-1}\log\ell.
\]
Because $d\ge3$, all the coefficients in the last display are bounded by
absolute constants. It follows from \eqref{eq:ell-log-comparison} that
\begin{equation}\label{eq:uniform-log-prechoice}
 c\ell\le L_p\le L\le CB_b\ell
\end{equation}
for absolute constants $c,C>0$.

Since $\delta\le1/4$, we have $L\ge\log4$. Therefore
\[
 \frac{\lambda_0}{L}
 =\frac{AL}{\vartheta}
 \ge8A\log4>6,
\]
and $\lambda_0\ge2$. Once $b$ has been fixed below,
\eqref{eq:uniform-log-prechoice} gives \eqref{eq:uniform-logs} for suitable
absolute constants $c_{\log},C_{\log}>0$.

We next determine the scale required by the book condition. Since
$p\ge1/(2r)$ and $d\ge3$,
\[
 \lambda_0\delta^{-1/(d-1)}
 =AL^2\vartheta^{-1}(\zeta p\vartheta)^{-1/(d-1)}
 \le CA L^2r^{1/(d-1)}\vartheta^{-d/(d-1)},
\]
where $C>0$ is absolute because $\zeta$ has already been fixed. Hence it is
enough to require
\begin{equation}\label{eq:uniform-threshold-reduction}
 k\ge CA L^2r^{1/(d-1)}
 \vartheta^{-(2d-1)/(d-1)}.
\end{equation}
Indeed, after increasing $C$ if necessary, this inequality gives
\[
 \frac{\vartheta k}{2}
 \ge\lambda_0\delta^{-1/(d-1)}.
\]
The quantity on the right is at least $\lambda_0\ge2$, so
$\vartheta k\ge4$. Consequently,
\[
 t=\lfloor\vartheta k\rfloor
 \ge\frac{\vartheta k}{2}
 \ge\lambda_0\delta^{-1/(d-1)}.
\]

Using \eqref{eq:uniform-log-prechoice} and substituting
\eqref{eq:vartheta} into \eqref{eq:uniform-threshold-reduction}, its
right-hand side is at most
\[
 C A B_b^2 b^{-(2d-1)/(d-1)}
 r^{(2d^2-1)/(d-1)^2}
 d^{2d(2d-1)/(d-1)^2}
 \ell^{2d^2/(d-1)^2}.
\]
Here
\[
 \frac1{d-1}+\frac{d(2d-1)}{(d-1)^2}
 =\frac{2d^2-1}{(d-1)^2}
\]
and
\[
 2+\frac{2(2d-1)}{(d-1)^2}
 =\frac{2d^2}{(d-1)^2}.
\]
Since
\[
 2\le\frac{2d-1}{d-1}\le\frac52
 \qquad(d\ge3),
\]
the power of $1/b$ is uniformly bounded once $b$ is fixed. Thus the
threshold in the statement of the lemma, with a sufficiently large absolute
constant after $b$ has been fixed, implies
\eqref{eq:uniform-threshold-reduction}.

We now choose $b$ by estimating the reservoir loss. By
\eqref{eq:correlation-constant-bounds}, there are absolute constants
$C_{\mathrm{corr}},C_{\rho}>0$ such that
\begin{equation}\label{eq:uniform-C-rho}
 C_{r,d}\le C_{\mathrm{corr}}rd^2\ell,
 \qquad
 \rho=\log r+\log(1/\beta_r)\le C_{\rho}r.
\end{equation}
The definition of $\vartheta$ gives the exact identity
\begin{equation}\label{eq:uniform-balancing}
 \vartheta^\tau rd^2\ell^{2/d}=b^\tau.
\end{equation}
Using $t\le\vartheta k$, \eqref{eq:transfer-rho-Pi-Xi},
\eqref{eq:transfer-lambda-gamma}, \eqref{eq:uniform-C-rho},
\eqref{eq:uniform-log-prechoice}, and \eqref{eq:uniform-balancing}, we first
obtain
$
 2rt\rho\le Cr^2\vartheta k.
$
For the second contribution to $rt\Xi$,
\begin{align*}
 4rtC_{r,d}\lambda_0^{1/d}
 \le C A^{1/d}r^2kd^2\ell\,
 \vartheta^\tau L^{2/d}=C A^{1/d}b^\tau rk\,
 \ell^{1-2/d}L^{2/d}\le C A^{1/d}b^\tau B_b^{2/d}rk\ell.
\end{align*}
For the third contribution,
\begin{align*}
 \frac{12rtC_{r,d}L}{\lambda_0^\tau}
 \le C A^{-\tau}r^2kd^2\ell\,
 \vartheta^{1+\tau}L^{1-2\tau}=C A^{-\tau}b^\tau r\vartheta k
 \left(\frac{\ell}{L}\right)^{(d-2)/d}\le C A^{-\tau}b^\tau r\vartheta k,
\end{align*}
where the last inequality uses $L\ge c\ell$.

Since
\[
 r\vartheta\le b,
 \qquad
 \tau\ge\frac23,
 \qquad
 A^{1/d}\le A^{1/3},
 \qquad
 B_b^{2/d}\le B_b^{2/3},
\]
and $\ell\le C\log r$, division by $rk\log r$ gives
\[
 \frac{rt\Xi}{rk\log r}
 \le C\left(
 b+A^{1/3}b^{2/3}B_b^{2/3}
 +A^{-2/3}b^{2/3}
 \right).
\]
The expression on the right tends to $0$ as $b\downarrow0$. We may therefore
fix an absolute $b\in(0,1)$ so small that this expression is at most $1/8$
and all the earlier smallness conditions hold. This proves
\eqref{eq:uniform-reservoir-exponent}. The choice of $b$ is independent of
$r,d,k$. With this value of $b$ fixed,
\eqref{eq:uniform-log-prechoice} also gives \eqref{eq:uniform-logs} for
suitable absolute constants $c_{\log},C_{\log}>0$.

It remains to choose $K_0$. Define
\[
 \mathcal T_{r,d}=
 r^{(2d^2-1)/(d-1)^2}
 d^{2d(2d-1)/(d-1)^2}
 \ell^{2d^2/(d-1)^2}.
\]
A direct calculation gives
\[
 \vartheta^2\mathcal T_{r,d}
 =b^2
 r^{(2d-1)/(d-1)^2}
 d^{2d/(d-1)^2}
 \ell^{2(d^2-2d+2)/(d-1)^2}.
\]
All three bases on the right are greater than $1$, and all three exponents
are positive. Therefore, whenever $k\ge K_0\mathcal T_{r,d}$,
$
 \vartheta^2k\ge K_0b^2.
$

First choose $K_0$ large enough that
\eqref{eq:uniform-threshold-reduction} holds. Increase it further so that
\[
 \left(\frac{\alpha}{16}-\gamma\right)\vartheta^2k\ge\log2;
\]
this is possible because
$\alpha/16-\gamma\ge3\alpha/64>0$. The floor estimate already proved gives
\[
 \frac{\vartheta t}{256}
 \ge\frac{\vartheta^2k}{512},
\]
so another absolute increase of $K_0$ gives the second inequality in
\eqref{eq:uniform-margins}. Since $\vartheta\le1/8$, we also have
$t\le\vartheta k\le k/8$. We have now proved
\eqref{eq:uniform-book} and \eqref{eq:uniform-margins}.

Set
$
 H=rk\log r.
$
Since $\gamma\le1/8$, $\vartheta\le1/8$, and $r\log r\ge1$, we have
\[
 \Gamma k=\gamma\vartheta^2k\le\frac{H}{64}.
\]
By increasing $K_0$ so that $k$ is sufficiently large, we also have
$\log2\le H/64$. Hence
\[
 \Gamma k+\log2\le\frac{H}{32}.
\]
This is the second inequality in \eqref{eq:uniform-size}, and it also gives
$
 \log\bigl(e^{-\Gamma k}r^{rk}\bigr)
 =H-\Gamma k\ge\log2,
$
which proves the first one.

Finally, after $b$ has been fixed,
$ \log(1/\eta)
=\log(r/\alpha)+\log(1/\vartheta)
 \le C\ell\le C'\log r
$
for absolute constants $C,C'>0$. A further increase of $K_0$ gives the first
inequality in \eqref{eq:uniform-auxiliary}. Moreover, $t\le k$, and for all
sufficiently large $k$,
\[
 \log(2rt)
 \le\log(2rk)
 =\log(2r)+\log k
 \le\frac18rk\log r.
\]
For example, this follows uniformly once $k\ge128$ from
$\log(2r)\le r\log r$ and $\log k\le k/16$. Since
$\vartheta^2k\ge K_0b^2$, this lower bound on $k$ is ensured by one final
absolute increase of $K_0$. This proves \eqref{eq:uniform-auxiliary} and
completes the proof.
\end{proof}

\begin{proof}[Proof of \Cref{thm:transfer}]
Let $b$ and $K_0$ be the absolute constants supplied by
\Cref{lem:uniform-parameters}, and take $K=K_0$. Set
\begin{equation}\label{eq:N-floor}
 N=\left\lfloor e^{-\Gamma k}r^{rk}\right\rfloor.
\end{equation}
By \eqref{eq:uniform-size}, the quantity inside the floor is at least $2$.
Thus
\begin{equation}\label{eq:N-lower}
 N\ge e^{-\Gamma k}r^{rk}-1
 \ge\frac12e^{-\Gamma k}r^{rk}.
\end{equation}

Consider an arbitrary $r$-coloring of $K_N$, and apply
\Cref{lem:regularization} with parameter $\eta$. Let
\[
 s_i=|S_i|,
 \qquad
 s=\sum_{i=1}^rs_i.
\]
If some $s_i\ge k$, then the color-$i$ clique $S_i$ already contains a
monochromatic copy of $K_k$. We may therefore assume that
$
 s_i<k
~(i\in[r]).
$

Suppose first that $s\ge rt/4$. Since $\eta\le1$, we have
$\log(1+\eta)\ge\eta/2$, and hence
\[
 s\log(1+\eta)
 \ge\frac{rt}{4}\cdot\frac\eta2
 =\frac{\alpha\vartheta t}{8}
 \ge\frac{\alpha}{16}\vartheta^2k
 \ge\Gamma k+\log2.
\]
The last two inequalities follow from \eqref{eq:uniform-book} and
\eqref{eq:uniform-margins}. Combining
\eqref{eq:regularization-size} and \eqref{eq:N-lower}, we obtain
\[
 |W|
 \ge\frac12e^{-\Gamma k}(1+\eta)^sr^{rk-s}
 \ge r^{rk-s}.
\]
For $j\in[r]$, the integer $k-s_j$ is positive, and
\eqref{eq:off-diagonal-basic} gives
$
 R(k-s_1,\ldots,k-s_r)\le r^{rk-s}.
$
Consequently, $W$ contains a color-$j$ clique of size $k-s_j$ for some
$j\in[r]$. Since $S_j\cap W=\emptyset$ and every edge between $S_j$ and
$W$ has color $j$, this clique together with $S_j$ forms a color-$j$ copy of
$K_k$.

We may therefore assume that
\begin{equation}\label{eq:small-spine}
 s<\frac{rt}{4}.
\end{equation}
Let
$
 I=\{i\in[r]:s_i+t<k\}.
$
This set is nonempty. Indeed, some $i\in[r]$ satisfies
$s_i\le s/r<t/4$, and hence
\[
 s_i+t<\frac{5t}{4}\le\frac{5k}{32}<k,
\]
where we used $t\le k/8$. For $i\in I$, define
\[
 m_i=R(k-s_1,\ldots,k-s_i-t,\ldots,k-s_r),
 \qquad
 m=\max_{i\in I}m_i.
\]
All Ramsey parameters in the definition of $m_i$ are positive integers.
Applying \Cref{lem:entropy-loss} separately to every $i\in I$ and taking
the maximum gives
\begin{equation}\label{eq:m-bound}
 m\le r^{rk-s-t}
 \exp\!\left(-\frac{t^2}{64k}\right).
\end{equation}

We next verify the hypotheses of the book lemma inside $W$. From
\eqref{eq:regularization-size}, \eqref{eq:N-lower},
\eqref{eq:small-spine}, and \eqref{eq:uniform-size},
\begin{align}
 \log|W|
 &\ge rk\log r-s\log r-\Gamma k-\log2\notag\\
 &\ge rk\log r-\frac{r\vartheta k}{4}\log r
       -\frac1{32}rk\log r\notag\\
 &\ge\frac{15}{16}rk\log r
 \ge\frac34rk\log r.
 \label{eq:W-crude}
\end{align}
Together with \eqref{eq:uniform-auxiliary}, this implies
$\log|W|\ge\log(1/\eta)$, or equivalently $\eta|W|\ge1$. Therefore, for
every $w\in W$ and $i\in[r]$, \eqref{eq:regularization-degree} gives
\[
 |N_i(w)\cap W|
 \ge\left(\frac1r-\eta\right)|W|-1
 \ge\left(\frac1r-2\eta\right)|W|
 =p|W|.
\]

By \Cref{thm:correlation}, the property
$\mathcal G_r^{(d)}(\beta_r,C_{r,d})$ holds. We apply \Cref{thm:book} with
$
 X=Y_1=\cdots=Y_r=W,
$
with correlation parameters $(\beta_r,C_{r,d})$, and with $\Pi,\Xi$ from
\eqref{eq:transfer-rho-Pi-Xi}. The statement of \Cref{thm:book} does not
require $X,Y_1,\ldots,Y_r$ to be pairwise disjoint. The preceding degree
estimate verifies \eqref{eq:book-density-hyp}, and the remaining parameter
assumptions of the book theorem follow from
\eqref{eq:uniform-basic}--\eqref{eq:uniform-book}.

It remains to verify the page and reservoir size assumptions. First,
\[
 \Pi
 =3\zeta\vartheta+\frac{6\vartheta L_p}{AL}
 \le\left(3\zeta+\frac6A\right)\vartheta.
\]
Since $2\alpha\vartheta\le1/2$, the inequality
$\log(1-x)\ge-2x$ for $0\le x\le1/2$ gives
\[
 \log(pr)=\log(1-2\alpha\vartheta)
 \ge-4\alpha\vartheta.
\]
Moreover, \eqref{eq:uniform-book} gives
\[
 \frac{t^2}{64k}\ge\frac{\vartheta t}{128},
 \qquad
 \Gamma k\le2\gamma\vartheta t.
\]
Using \eqref{eq:regularization-size}, \eqref{eq:N-lower}, and
\eqref{eq:m-bound}, and discarding the nonnegative term
$s\log(1+\eta)$, we obtain
\begin{align*}
 \log\frac{|W|}{p^{-t}e^{\Pi t}m}
 &\ge t\log(pr)-\Pi t+\frac{t^2}{64k}
       -\Gamma k-\log2\\
 &\ge\left(
 \frac1{128}-4\alpha-3\zeta-\frac6A-2\gamma
 \right)\vartheta t-\log2\\
 &\ge\frac{\vartheta t}{256}-\log2
 \ge0,
\end{align*}
where the last line uses \eqref{eq:constant-choice} and
\eqref{eq:uniform-margins}. Hence
$
 |W|\ge p^{-t}e^{\Pi t}m.
$

Finally, \eqref{eq:W-crude}, \eqref{eq:uniform-auxiliary}, and
\eqref{eq:uniform-reservoir-exponent} give
\[
 \log(2rt)+rt\Xi
 \le\frac14rk\log r
 \le\log|W|.
\]
Therefore
$
 |W|\ge2rt\,e^{rt\Xi}.
$
All the hypotheses of \Cref{thm:book} are now satisfied.

By \Cref{thm:book}, there exist an index $i\in[r]$, a color-$i$ clique
$T\subseteq W$ with $|T|=t$, and a set
$
 P\subseteq W\setminus T$ with
 $|P|=m$,
such that every edge between $T$ and $P$ has color $i$.

If $i\notin I$, then $s_i+t\ge k$. Since $T\subseteq W$,
\Cref{lem:spine-compatibility} shows that $S_i\cup T$ contains a color-$i$
copy of $K_k$. Suppose instead that $i\in I$. Since $|P|=m\ge m_i$, the
definition of $m_i$ implies that the coloring induced by $P$ contains either
a color-$i$ clique of size $k-s_i-t$, or a color-$j$ clique of size
$k-s_j$ for some $j\ne i$. In either case,
\Cref{lem:spine-compatibility} gives a monochromatic copy of $K_k$.

We have proved that every $r$-coloring of $K_N$ contains a monochromatic
$K_k$. Hence, by the definition of the Ramsey number and
\eqref{eq:N-floor},
\[
 R_r(k)\le N
 \le e^{-\Gamma k}r^{rk}
 =\exp(-\gamma\vartheta^2k)r^{rk}.
\]
This proves \eqref{eq:transfer-saving} with $c=\gamma$.
\end{proof}

\begin{proof}[Proof of \Cref{thm:parameterised-main}]
Let $b,c_{\mathrm{tr}},K_{\mathrm{tr}}>0$ be the absolute constants supplied
by \Cref{thm:transfer}. For every admissible pair $(r,d)$,
\[
 \vartheta_{r,d}^2
 =\frac{b^2}{
 r^{2d/(d-1)}d^{4d/(d-1)}
 \bigl(\log(2rd)\bigr)^{4/(d-1)}}.
\]
The hypothesis in \eqref{eq:parameter-threshold}, with
$K=K_{\mathrm{tr}}$, is exactly the threshold required in
\Cref{thm:transfer}. That theorem therefore gives
\[
 R_r(k)\le
 \exp\!\left(
 -c_{\mathrm{tr}}b^2\frac{k}{
 r^{2d/(d-1)}d^{4d/(d-1)}
 \bigl(\log(2rd)\bigr)^{4/(d-1)}}
 \right)r^{rk}.
\]
Thus \eqref{eq:parameter-saving} holds with the absolute constants
$c=c_{\mathrm{tr}}b^2$ and $K=K_{\mathrm{tr}}$.
\end{proof}

\begin{proof}[Proof of \Cref{thm:main}]
Set
\[
 \ell_r=\log(2r),
 \qquad
 d=\max\left\{3,\left\lceil\frac{\ell_r}{2}\right\rceil\right\},
 \qquad
 h=\log(2rd).
\]
Then $d$ is an integer and
$
 3\le d\le\max\{3,\ell_r\},
$
so $d$ is admissible in \Cref{thm:parameterised-main}. Indeed, if
$\ell_r<3$, then $d=3$. If $\ell_r\ge3$, then
$\lceil\ell_r/2\rceil\le\ell_r$.

Let
\[
 C_*=\frac3{\log4}.
\]
Since $\ell_r\ge\log4$, we have
\[
 d\le C_*\ell_r,
 \qquad
 d-1\ge\frac{\ell_r}{3}.
\]
For the second inequality, note that $d=3$ when $\ell_r\le6$, while for
$\ell_r>6$,
\[
 d-1\ge\frac{\ell_r}{2}-1\ge\frac{\ell_r}{3}.
\]
Moreover, $\log d\le\ell_r$, and hence
\[
 \ell_r\le h=\ell_r+\log d\le2\ell_r,
 \qquad
 \log h\le\ell_r.
\]
The last inequality follows from $2\ell_r\le e^{\ell_r}$ for
$\ell_r\ge\log4$.

Write
\[
 D_{r,d}=
 r^{2d/(d-1)}d^{4d/(d-1)}h^{4/(d-1)}.
\]
Using
\[
 \frac{2d}{d-1}=2+\frac2{d-1},
 \qquad
 \frac{4d}{d-1}=4+\frac4{d-1},
\]
we obtain
\begin{align*}
 D_{r,d}
 &=r^2d^4
 \exp\!\left(
 \frac{2\log r+4\log d+4\log h}{d-1}
 \right)\le e^{30}C_*^4r^2\ell_r^4.
\end{align*}

Next, set
\[
 Q_{r,d}=
 r^{(2d^2-1)/(d-1)^2}
 d^{2d(2d-1)/(d-1)^2}
 h^{2d^2/(d-1)^2}.
\]
The exact identities
\[
 \frac{2d^2-1}{(d-1)^2}
 =2+\frac{4d-3}{(d-1)^2},
 \qquad
 \frac{2d(2d-1)}{(d-1)^2}
 =4+\frac{6d-4}{(d-1)^2},
\]
and
\[
 \frac{2d^2}{(d-1)^2}
 =2+\frac{4d-2}{(d-1)^2}
\]
give
\[
 Q_{r,d}=r^2d^4h^2e^E,
\]
where
\[
 E=
 \frac{(4d-3)\log r+(6d-4)\log d+(4d-2)\log h}
 {(d-1)^2}.
\]
For $d\ge3$,
\[
 4d-3\le6(d-1),
 \qquad
 6d-4\le8(d-1),
 \qquad
 4d-2\le5(d-1).
\]
Therefore
\[
 E\le\frac{19\ell_r}{d-1}\le57,
\]
and consequently
\[
 Q_{r,d}\le4e^{57}C_*^4r^2\ell_r^6.
\]

Let $c_{\mathrm{par}},K_{\mathrm{par}}>0$ be the constants in
\Cref{thm:parameterised-main}. If
$
 k\ge K_{\mathrm{par}}(4e^{57}C_*^4)r^2\ell_r^6,
$
then the threshold in that theorem is satisfied. It follows that
\begin{align*}
 R_r(k)
 \le\exp\!\left(-c_{\mathrm{par}}\frac{k}{D_{r,d}}\right)r^{rk}\le\exp\!\left(
 -\frac{c_{\mathrm{par}}}{e^{30}C_*^4}
 \frac{k}{r^2\ell_r^4}
 \right)r^{rk}.
\end{align*}
Renaming the two absolute constants proves \Cref{thm:main}.
\end{proof}

\end{document}